\theoremstyle{plain}
\newtheorem{thm}{Theorem}
\newtheorem{lem}[thm]{Lemma}
\newtheorem{prop}[thm]{Proposition}
\newtheorem{cor}[thm]{Corollary}
\theoremstyle{definition}
\newtheorem*{example*}{Example}
\newtheorem*{rem*}{Remark}
\newtheorem{rem}[thm]{Remark}
\newcommand{\al}{\alpha}
\newcommand{\be}{\beta}
\newcommand{\R}{\mathbb{R}}
\DeclareMathOperator{\sgn}{sgn}
\DeclareMathOperator{\dist}{dist}
\DeclareSymbolFont{bbsymbol}{U}{bbold}{m}{n}
\DeclareMathSymbol{\ind}{\mathbin}{bbsymbol}{'061}
\title[Sharp fractional Hardy inequalities with a~remainder for $1<p<2$]{Sharp fractional Hardy inequalities with a~remainder for $\boldsymbol{1<p<2}$}
\author[B{.} Dyda]{Bart{\l}omiej Dyda}
\author[M{.} Kijaczko]{Micha\l{} Kijaczko}
\keywords{fractional Hardy inequality, fractional Hardy--Sobolev--Maz'ya inequality, weight,  non-linear ground state representation, remainder}
\subjclass[2020]{46E35, 39B72, 26D15}
\address[B.D. and M.K.]{Faculty of Pure and Applied Mathematics\\ Wroc{\l}aw University 
	of Science and Technology\\
	Wybrze\.ze Wyspia\'nskiego 27,
	50-370 Wroc{\l}aw, Poland
}
\email{bdyda@pwr.edu.pl\qquad dyda@math.uni-bielefeld.de}
\email{michal.kijaczko@pwr.edu.pl}
\begin{document}

\begin{abstract}
The main purpose of this article is to obtain (weighted) fractional Hardy inequalities with a remainder and fractional Hardy--Sobolev--Maz'ya inequalities valid for $1<p<2$.
\end{abstract}

\maketitle

\section{Introduction}
In this paper we are interested in (weighted) fractional order Hardy inequalities. The basic form of such inequality is
\begin{equation}\label{generalweightedfractionalHardy}
\int_{\Omega}\int_{\Omega}\frac{|u(x)-u(y)|^{p}}{|x-y|^{d+sp}}\dist(x,\partial\Omega)^{-\alpha}\dist(y,\partial\Omega)^{-\beta}\,dy\,dx\geq C\int_{\Omega}\frac{|u(x)|^{p}}{\dist(x,\partial\Omega)^{sp+\alpha+\beta}}\,dx, 
\end{equation}
where $u\in C_{c}^{1}(\Omega)$ ( continuously differentiable functions with compact support), $C=C(d,s,p,\al,\be,\Omega)$ is a universal constant independent of $u$, $\Omega$ is a nonempty, proper, open subset of $\R^d$, $\dist(x,\partial\Omega)=\inf_{y\in\partial\Omega}|x-y|$ denotes the distance to the boundary and $s$, $p$, $\al$ and $\beta$ are some parameters.

Let us suppose that the constant $C$ is the largest possible for which \eqref{generalweightedfractionalHardy} holds. Such $C$ has been explicitly found for $\Omega$ being either $\R^d\setminus \{0\}$, the halfspace $\R^d_+$,
  or a~convex domain, see \cite{MR2469027, MR2663757, MR2723817, LossSloane, sharpweighted}.
In the case when $p\geq 2$, Frank and Seiringer showed, in a~more general setting,
that one can add an additional term  on the right hand side of inequality \eqref{generalweightedfractionalHardy}, so called \emph{remainder},
see \cite{MR2469027, MR2723817} . The resulting inequality  has been applied in \cite{MR2823046} to obtain Hardy--Sobolev--Maz'ya inequality.

In this note, our main  goal is to extend these results to the case of $1<p<2$,
  see Proposition~\ref{remainder1<p<2}, which  holds in a~setting of more general Hardy inequalities.
  Analogously, it also yields Hardy--Sobolev--Maz'ya inequalities,
see Theorems~\ref{tw12} and~\ref{twHSMgeneraldomains}.

For the sake of completeness let us mention that weighted fractional Gagliardo seminorms appearing on the left hand side of \eqref{generalweightedfractionalHardy} and its modifications were investigated recently by many authors -- for example in \cite{MR3420496} and \cite{MK} in connection with weighted fractional Sobolev spaces, or in \cite{ID} as an object of study in the theory of interpolation spaces. The weighted fractional Hardy inequality \eqref{generalweightedfractionalHardy} appeared in \cite{MR362603f1} for $\Omega=\R^d\setminus\{0\}$ or in \cite{MR3803664,MR3237044} for more general domains.

\subsection{Non-linear ground state representation of Frank and Seiringer}\label{subs:FS}
Following~\cite{MR2469027}, let $k(x,y)$ be a symmetric, positive kernel and $\Omega\subset\R^d$ be an open, nonempty set. Let us define the functional 
$$
E[u]=\int_{\Omega}\int_{\Omega}|u(x)-u(y)|^{p}k(x,y)\,dy\,dx
$$
and 
$$
V_{\varepsilon}(x)=2w(x)^{-p+1}\int_\Omega\left(w(x)-w(y)\right)\left|w(x)-w(y)\right|^{p-2}k_{\varepsilon}(x,y)\,dy,
$$
where $w$ is a positive, measurable function on $\Omega$ and $\{k_{\varepsilon}(x,y)\}_{\varepsilon>0}$ is a family of measurable, symmetric kernels satisfying the assumptions $0\leq k_\varepsilon(x,y)\leq k(x,y)$, $\lim_{\varepsilon\rightarrow 0}k_\varepsilon(x,y)=k(x,y)$ for all $x,y\in\Omega$. Then, if the integrals defining $V_\varepsilon(x)$ are absolutely convergent for almost all $x\in\Omega$ and converge weakly to some $V$ in $L^{1}_{loc}(\Omega)$, as $\varepsilon$ tends to $0$, we have the following Hardy-type inequality, 
\begin{equation}\label{generalhardy}
 E[u]\geq\int_{\Omega}|u(x)|^{p}V(x)\,dx,   
\end{equation}
for all compactly supported $u$ with $\int_{\Omega}|u(x)|^{p}V_{+}(x)\,dx$ finite, see \cite[Proposition 2.2]{MR2469027}.

Moreover, if in addition $p\geq 2$, then the inequality \eqref{generalhardy} can be improved by a remainder
\begin{equation}
E_{w}[v]=\int_{\Omega}\int_{\Omega}|v(x)-v(y)|^{p}w(x)^{\frac{p}{2}}k(x,y)w(y)^{\frac{p}{2}}\,dy\,dx,\quad u=wv,
\end{equation}
that is the inequality 
\begin{equation}\label{generalhardyremainder}
 E[u]-\int_{\Omega}|u(x)|^{p}V(x)\,dx\geq c_{p}E_{w}[v]  
\end{equation}
holds with the same assumptions as in \eqref{generalhardy}, with the constant $c_p$ given by
\begin{equation}\label{cp}
c_{p}=\min_{0<\tau<\frac{1}{2}}\left((1-\tau)^{p}-\tau^{p}+p\tau^{p-1}\right),  
\end{equation}
see \cite[Proposition 2.3]{MR2469027}. When $p=2$, the inequality \eqref{generalhardyremainder} becomes an equality.

\subsection{Remainder for $\textbf{1<\emph{p}<2}$}
We propose an extension of \eqref{generalhardyremainder} to the case when $1<p<2$.
In what follows we use the notation of the so called \emph{French power}, that is for  $a\in\R$ and $k>0$ we define
$$
a^{\langle k \rangle }=|a|^{k}\sgn(a).
$$
It holds $\frac{d}{da} a^{\langle k \rangle }=k|a|^{k-1}$.

For $1<p<2$ we denote
\[
\widetilde{E}_w[u]:=\int_{\Omega}\int_{\Omega}\left(u(x)^{\langle p/2 \rangle }-u(y)^{\langle p/2 \rangle }\right)^2W(x,y)k(x,y)\,dy\,dx,
\]
where 
\[
W(x,y):=\min\{w(x),w(y)\}\max\{w(x),w(y)\}^{p-1}=w(x)w(y)\max\{w(x),w(y)\}^{p-2}.
\]
The functional defined above will serve as a remainder in a following general fractional Hardy-type inequality for $1<p<2$.
\begin{prop}\label{remainder1<p<2}
  Let $u$, $w$ and $V$ be like in Subsection~\ref{subs:FS}, in particular, we assume that $u\in C_c^1(\Omega)$ and $\int_{\Omega}|u(x)|^{p}V_{+}(x)\,dx < \infty$. We also assume that $E[u]<\infty$.
  Let $v=u/w$ and $1<p<2$.
  For real-valued functions $u$ it holds
\begin{equation}\label{generalhardyremainder1<p<21}
 E[u]-\int_{\Omega}|u(x)|^{p}V(x)\,dx\geq C_p\widetilde{E}_{w}[v],
\end{equation}
with $C_p$ defined in \eqref{Cp}.
When $u\geq 0$, we have
\begin{equation}\label{generalhardyremainder1<p<22}
 E[u]-\int_{\Omega}|u(x)|^{p}V(x)\,dx\geq (p-1)\widetilde{E}_{w}[v].      
\end{equation}
Finally,  for all complex-valued functions $u$,
\begin{equation}\label{generalhardyremainder1<p<23}
 E[u]-\int_{\Omega}|u(x)|^{p}V(x)\,dx\geq (p-1)\widetilde{E}_{w}[|v|].      
\end{equation}
\end{prop}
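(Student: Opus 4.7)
The plan is to reduce \eqref{generalhardyremainder1<p<21}--\eqref{generalhardyremainder1<p<23} to a pointwise algebraic inequality, in the spirit of the $p\ge 2$ argument of Frank and Seiringer. Writing $u=wv$ and symmetrizing the definition of $V_\varepsilon$ by interchanging $x$ and $y$, one obtains
\[
\int_\Omega|u(x)|^p V_\varepsilon(x)\,dx = \int_\Omega\int_\Omega \bigl(|v(x)|^p w(x)-|v(y)|^p w(y)\bigr)\bigl(w(x)-w(y)\bigr)^{\langle p-1\rangle} k_\varepsilon(x,y)\,dy\,dx.
\]
After subtracting from $E[u]$ and passing to the limit $\varepsilon\to 0$ (as in Subsection~\ref{subs:FS}), the proposition reduces to verifying, for all $a,b>0$ and $s,t\in\R$, the pointwise bound
\[
|as-bt|^p-(|s|^p a-|t|^p b)(a-b)^{\langle p-1\rangle}\ge C\,\min(a,b)\max(a,b)^{p-1}\bigl(s^{\langle p/2\rangle}-t^{\langle p/2\rangle}\bigr)^2,
\]
with $C=p-1$ when $s,t\ge 0$ and $C=C_p$ for arbitrary $s,t\in\R$.

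By the symmetry $(a,s)\leftrightarrow(b,t)$ and joint homogeneity of degree $p$ in $(a,b,s,t)$, I may assume $a\ge b>0$, set $\tau=b/a\in(0,1]$, and divide by $a^p$ to obtain a pointwise inequality in the three real variables $(s,t,\tau)$. For the nonneg case $s,t\ge 0$, I would argue by calculus: fix $\tau$ and $s$, regard the resulting difference as a function of $t\ge 0$, identify the natural critical point where equality holds, and analyze the derivative (or second derivative) to conclude nonnegativity. For general real $s,t$, the new phenomenon is that opposite signs of $s,t$ force a strictly smaller constant; one must carry out a two-parameter optimization (over sign configuration together with $\tau$, or over an auxiliary parameter analogous to the one in \eqref{cp}) to recover $C_p$ as in \eqref{Cp}, mirroring the minimization that produces $c_p$ in the $p\ge 2$ case.

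The complex-valued inequality \eqref{generalhardyremainder1<p<23} follows immediately from the nonneg case \eqref{generalhardyremainder1<p<22}: the reverse triangle inequality $|u(x)-u(y)|\ge\bigl||u(x)|-|u(y)|\bigr|$ gives $E[u]\ge E[|u|]$, while $\int_\Omega |u|^p V\,dx$ is unchanged by $u\mapsto |u|$, so applying \eqref{generalhardyremainder1<p<22} to the nonneg function $|u|$ with $|v|=|u|/w$ yields \eqref{generalhardyremainder1<p<23} with the same constant $p-1$.

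The main obstacle is the pointwise algebraic inequality in the general real case, since the direct convexity argument underpinning the Frank--Seiringer bound at $p\ge 2$ fails in the regime $1<p<2$. The novelty of the remainder $\widetilde{E}_w[v]$, with its $(s^{\langle p/2\rangle}-t^{\langle p/2\rangle})^2$ structure and the asymmetric weight $W(x,y)=\min(w(x),w(y))\max(w(x),w(y))^{p-1}$ in place of the symmetric $w(x)^{p/2}w(y)^{p/2}$, is precisely what is needed to accommodate the sublinear quadratic geometry of the range $1<p<2$; executing the optimization that identifies $C_p$ explicitly is the technical heart of the proof.
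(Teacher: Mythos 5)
Your reduction to a pointwise algebraic inequality is exactly the paper's strategy (the paper follows \cite[Proof of Proposition 2.2]{MR2469027} with $t=w(y)/w(x)$, $a=v(x)/v(y)$ and invokes the pointwise bound), and your symmetrization of $\int|u|^pV_\varepsilon$ and your handling of the complex case via $|u(x)-u(y)|\geq\bigl||u(x)|-|u(y)|\bigr|$ are both correct and match the paper. One small simplification you miss: after dividing out $\max(w(x),w(y))^p$ you can also divide out $|v(y)|^p$ and use the identity $\bigl(v(x)/v(y)\bigr)^{\langle p/2\rangle}=v(x)^{\langle p/2\rangle}/v(y)^{\langle p/2\rangle}$, which collapses your three remaining variables $(s,t,\tau)$ to the two variables $(a,t)$ in the paper's Proposition~\ref{proposition}: $|a-t|^{p}\geq (1-t)^{p-1}(|a|^p-t)+C t(a^{\langle p/2\rangle}-1)^2$, $a\in\R$, $0\le t\le1$.

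The genuine gap is that you never prove this pointwise inequality; you only describe a plan (``argue by calculus, identify the critical point, analyze the derivative'' for $s,t\ge0$, and ``a two-parameter optimization'' for general real $s,t$). This is not a routine verification — it is the entire technical content of the result, and the paper devotes a multi-page Appendix to it. The actual argument is considerably more delicate than the sketch suggests: for $a>1$ the paper shows $\partial_a f\le0$ by passing to $g=\log$ of a ratio, then to a function $h(a,t)$ linear in $t$, reducing to checking $h(a,0)\le0$ and $h(a,1)\le0$ separately; the range $0<a<1$ is handled by comparing $f(1/a,t)$ to $f(a,t)$ via a second monotonicity argument; and $a\le0$ requires a distinct treatment depending on whether $1<p<\sqrt2$ or $\sqrt2\le p<2$, producing the two branches of $C_p=\max\{(p-1)/p,\,p(p-1)/2\}$. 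None of this follows from a generic critical-point analysis, and without it the proposition is not proved. Also note that the sharp constant in the nonnegative case, $p-1$, is obtained as $\lim_{a\to\infty}f(a,t)=\frac{1-(1-t)^{p-1}}{t}\ge p-1$ by concavity of $\tau\mapsto\tau^{p-1}$ — this endpoint computation is a specific step you would need, not something that emerges automatically from the vanishing-derivative picture.
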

  Note that for real-valued functions, both inequalities \eqref{generalhardyremainder1<p<21} and
  \eqref{generalhardyremainder1<p<23} hold. Since $C_p < p-1$, 
  the constant is better in the latter inequality, for the price of taking absolute value of $v$
  in $\widetilde{E}_{w}[|v|]$.

As a consequence of Proposition \ref{remainder1<p<2} we obtain weighted fractional Hardy inequalities with a remainder for $\R^d$ and $\R^d_+$ for $1<p<2$.

\begin{thm}[\textbf{Sharp weighted fractional Hardy inequality for 1<$\bold{\emph{p}}$<2 with a remainder on $\R^d$}]\label{tw10}
Let $0<s<1,\,1<p<2$, $\al,\be,\al+\be\in (-sp,d)$. Then for all $u\in C_{c}^1(\R^d)$, when $sp+\al+\be<d$ and for all $u\in C_{c}^1(\R^d\setminus\{0\})$, when $sp+\al+\be>d$, the following inequality holds,
\begin{align}
&\int_{\R^d}\int_{\R^d}\frac{|u(x)-u(y)|^{p}}{|x-y|^{d+sp}}|x|^{-\al}|y|^{-\be}\,dy\,dx-\mathcal{C}\int_{\R^d}\frac{|u(x)|^{p}}{|x|^{sp+\al+\be}}\,dx \label{HardyRd} \\
&\geq C_p\int_{\R^d}\int_{\R^d}\frac{\left(v(x)^{\langle p/2 \rangle}-v(y)^{\langle p/2 \rangle}\right)^{2}}{|x-y|^{d+sp}}W(x,y)|x|^{-\al}|y|^{-\be}\,dy\,dx, \nonumber
\end{align}
where $v(x)=u(x)|x|^{\frac{d-\al-\be-sp}{p}}$ if $u$ is real-valued,
and $v(x)=|u(x)|\,|x|^{\frac{d-\al-\be-sp}{p}}$ if $u$ is complex-valued,
$$
W(x,y)=\min\left\{|x|^{-\frac{d-\al-\be-sp}{p}},|y|^{-\frac{d-\al-\be-sp}{p}}\right\}\max\left\{|x|^{-\frac{d-\al-\be-sp}{p}},|y|^{-\frac{d-\al-\be-sp}{p}}\right\}^{p-1},
$$
and the constants $C_p$ and $\mathcal{C}>0$ are given by \eqref{Cp} and \eqref{C}. When $u\geq 0$ or $u$ is complex-valued, the constant $C_p$ in the inequality above can be replaced by $p-1$.
\end{thm}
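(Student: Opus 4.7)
The plan is to reduce the theorem to an application of Proposition~\ref{remainder1<p<2} by choosing the appropriate kernel and weight, and then invoke the known sharp Hardy inequality on $\R^d$ (without remainder) from \cite{sharpweighted} to identify the potential $V$.

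First, I would set $\Omega=\R^d$ (or $\R^d\setminus\{0\}$ in the supercritical range $sp+\alpha+\beta>d$) and choose the symmetric kernel
\[
k(x,y)=\tfrac{1}{2}\bigl(|x|^{-\alpha}|y|^{-\beta}+|x|^{-\beta}|y|^{-\alpha}\bigr)|x-y|^{-d-sp},
\]
which by symmetrization of $|u(x)-u(y)|^p$ makes $E[u]$ coincide with the first integral on the left-hand side of \eqref{HardyRd}. For the weight, I would take $w(x)=|x|^{-\gamma}$ with $\gamma=(d-\alpha-\beta-sp)/p$; this is precisely the known virtual ground state for the sharp inequality \eqref{HardyRd} without remainder. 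Under the standing assumption $\alpha,\beta,\alpha+\beta\in(-sp,d)$ one has $v=u/w=u|x|^{\gamma}$ and
\[
W(x,y)=w(x)w(y)\max\{w(x),w(y)\}^{p-2}=\min\{|x|^{-\gamma},|y|^{-\gamma}\}\max\{|x|^{-\gamma},|y|^{-\gamma}\}^{p-1},
\]
so the remainder functional $\widetilde E_w[v]$ reproduces exactly the right-hand side of \eqref{HardyRd}.

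Next, I would verify the hypotheses of Proposition~\ref{remainder1<p<2}. One truncates, for example by $k_\varepsilon(x,y)=k(x,y)\,\ind_{\{|x-y|>\varepsilon,\ |x|,|y|>\varepsilon\}}$, making $V_\varepsilon$ absolutely convergent. The scaling $w(\lambda x)=\lambda^{-\gamma}w(x)$ together with homogeneity of $k$ shows that $V_\varepsilon(x)$ is homogeneous of degree $-sp-\alpha-\beta$ up to a factor depending only on $\varepsilon|x|^{-1}$; passing to the weak $L^1_{\mathrm{loc}}$ limit and using the evaluation of the resulting principal-value integral—this is exactly the computation carried out in \cite{sharpweighted} to identify the sharp constant—yields
\[
V(x)=\frac{\mathcal C}{|x|^{sp+\alpha+\beta}},
\]
with $\mathcal C$ as in \eqref{C}. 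Finiteness of $E[u]$ and of $\int|u|^pV_+\,dx$ for $u\in C_c^1$ (respectively $C_c^1(\R^d\setminus\{0\})$ in the supercritical regime) are then standard under the range of parameters in the statement.

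Once $V$ is identified, Proposition~\ref{remainder1<p<2} applied with this $k$ and $w$ delivers \eqref{HardyRd} with constant $C_p$ for real-valued $u$ via \eqref{generalhardyremainder1<p<21}; for nonnegative $u$ the stronger constant $p-1$ comes from \eqref{generalhardyremainder1<p<22}, and for complex-valued $u$ from \eqref{generalhardyremainder1<p<23}, which is why $v$ is replaced by $|u|\,|x|^{\gamma}$ in that case. The main technical obstacle is the identification of $V$: one needs both the explicit principal-value computation and the justification that the truncated $V_\varepsilon$ converge weakly in $L^1_{\mathrm{loc}}$ across the full parameter range $\alpha,\beta,\alpha+\beta\in(-sp,d)$, but both of these are already available in \cite{sharpweighted}, so I would simply invoke them rather than redo the calculation here.
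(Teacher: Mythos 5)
Your proposal matches the paper's proof: both reduce the statement to Proposition~\ref{remainder1<p<2} with the weight $w(x)=|x|^{-(d-\al-\be-sp)/p}$ and invoke \cite[Lemma 8]{sharpweighted} to identify the limit potential $V(x)=\mathcal C\,|x|^{-sp-\al-\be}$, then read off the three cases (real-valued, nonnegative, complex-valued) from \eqref{generalhardyremainder1<p<21}--\eqref{generalhardyremainder1<p<23}. The only cosmetic difference is your choice of truncation $k_\varepsilon$; the paper uses the spherical cutoff $||x|-|y||>\varepsilon$ as in \cite{sharpweighted}, but this does not change the argument.
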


\begin{thm}[\textbf{Sharp weighted fractional Hardy inequality for $\boldsymbol{1<p<2}$ with a remainder on $\R^d_+$}]\label{tw11}
Let $0<s<1,\,1<p<2$, $\al,\be,\al+\be\in (-1,sp)$, $\al+\be+sp\neq 1$. 
Then for all $u\in C_c^1(\R^d_+)$, when $\al+\be+sp>1$ and $u\in C_c^1(\overline{\R^d_+})$, when $\al+\be+sp<1$, the following inequality holds,
\begin{align}
&\int_{\R^d_+}\int_{\R^d_+}\frac{|u(x)-u(y)|^{p}}{|x-y|^{d+sp}}x_d^{\al}\,y_d^{\be}\,dy\,dx-\mathcal{D}\int_{\R^d_+}\frac{|u(x)|^{p}}{x_d^{sp-\al-\be}}\,dx \label{HardyHS} \\
&\geq C_p\int_{\R^d_+}\int_{\R^d_+}\frac{\left(v(x)^{\langle p/2 \rangle}-v(y)^{\langle p/2 \rangle}\right)^{2}}{|x-y|^{d+sp}}V(x,y)x_d^{\al}y_d^{\be}\,dy\,dx, \nonumber
\end{align}
where $v(x)=u(x)x_d^{\frac{1+\al+\be-sp}{p}}$  if $u$ is real-valued,
and $v(x)=|u(x)|\, x_d^{\frac{1+\al+\be-sp}{p}}$  if $u$ is complex-valued,
$$
V(x,y)=\min\left\{x_d^{-\frac{1+\al+\be-sp}{p}},y_d^{-\frac{1+\al+\be-sp}{p}}\right\}\max\left\{x_d^{-\frac{1+\al+\be-sp}{p}},y_d^{-\frac{1+\al+\be-sp}{p}}\right\}^{p-1},
$$
and the constants $C_p$ and $\mathcal{D}>0$ are given by \eqref{Cp} and \eqref{D}. When $u\geq 0$ or $u$ is complex-valued, the constant $C_p$ in the inequality above can be replaced by $p-1$.
\end{thm}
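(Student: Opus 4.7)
The plan is to specialize Proposition~\ref{remainder1<p<2} to $\Omega = \R^d_+$ with a kernel and virtual ground state tailored to the halfspace. Since the weight $x_d^\al y_d^\be$ on the LHS of \eqref{HardyHS} is not symmetric in $x,y$, I would first replace it by the symmetrized kernel
\[
k(x,y) = \frac{x_d^\al y_d^\be + x_d^\be y_d^\al}{2|x-y|^{d+sp}},
\]
which, because $|u(x)-u(y)|^p$ is symmetric, reproduces $E[u]$ exactly. For the ground state I would choose $w(x) = x_d^{-(1+\al+\be-sp)/p}$ on $\R^d_+$. With this choice $v=u/w$ agrees with the $v$ in the statement, and the remainder weight $W(x,y)$ appearing in Proposition~\ref{remainder1<p<2} coincides with $V(x,y)$ in \eqref{HardyHS}.

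The key technical step is to verify that, with this $w$ and $k$, the potential produced by the non-linear ground state procedure of Subsection~\ref{subs:FS} equals exactly $\mathcal{D}\,x_d^{\al+\be-sp}$, with $\mathcal{D}$ as in \eqref{D}. Using translation invariance in the first $d-1$ coordinates together with dilation invariance of the relevant integrals in $x_d$ (under $y\mapsto x' + x_d\tilde y$), the computation reduces to a one-dimensional identity at $x=e_d$, which is the core computation already performed in \cite{sharpweighted} for the sharp (no-remainder) weighted Hardy inequality on the halfspace. The assumptions $\al,\be,\al+\be\in(-1,sp)$ guarantee integrability of the resulting one-variable integral, and the condition $\al+\be+sp\neq 1$ ensures that $w$ is nonconstant. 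The dichotomy on the admissible class of $u$ reflects the boundary behavior of $w$: when $\al+\be+sp>1$ one has $w\to\infty$ near $\partial\R^d_+$ and $u$ is required to be compactly supported away from the boundary, while when $\al+\be+sp<1$ one has $w\to 0$ at the boundary and $u$ may reach it.

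Once this identification of $V$ is in hand, Proposition~\ref{remainder1<p<2} applied to the pair $(u,w)$ immediately delivers the three desired inequalities: for real-valued $u$ via \eqref{generalhardyremainder1<p<21} with constant $C_p$, for $u\geq 0$ via \eqref{generalhardyremainder1<p<22} with constant $p-1$, and for complex-valued $u$ via \eqref{generalhardyremainder1<p<23} with constant $p-1$ applied to $|v|$. The definitions of $v$ in the theorem statement (with or without absolute value) are exactly the ones matching these three cases.

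The main obstacle is the rigorous justification of the virtual ground state identity, namely the weak $L^1_{\text{loc}}$ convergence of the regularized potentials $V_\varepsilon(x)$ to $\mathcal{D}\,x_d^{\al+\be-sp}$. The computation requires handling a principal-value singularity when $y$ is close to $x$, together with the boundary and far-field behavior that are controlled differently in the two regimes $\al+\be+sp\lessgtr 1$. This analytic step is, however, precisely the one already carried out in the proof of the sharp weighted fractional Hardy inequality on $\R^d_+$ in \cite{sharpweighted}, so the genuinely new content of the present theorem is the addition of the remainder term, which is supplied by Proposition~\ref{remainder1<p<2}.
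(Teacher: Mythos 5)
Your proposal matches the paper's proof: both apply Proposition~\ref{remainder1<p<2} with the ground state $w(x)=x_d^{-(1+\alpha+\beta-sp)/p}$ and invoke the computation from \cite[Lemma 10]{sharpweighted} showing $V_2(x)=\mathcal{D}\,x_d^{\alpha+\beta-sp}w(x)^{p-1}$, and your explicit symmetrization of the weight $x_d^{\alpha}y_d^{\beta}$ is correct and indeed required (the paper leaves it implicit via the cited lemma). One small slip in your heuristic aside: $w\to\infty$ at $\partial\R^d_+$ precisely when $\alpha+\beta>sp-1$, which is not the same threshold as $\alpha+\beta+sp>1$ unless $sp=1$; this does not affect the validity of your argument since the admissibility of the test function class is part of what \cite[Lemma 10]{sharpweighted} justifies.
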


\subsection{Application: fractional Hardy--Sobolev--Maz'ya inequalities}

The  obtained inequalities with the remainder may be used to prove that the (weighted) fractional Hardy--Sobolev--Maz'ya inequality on $\R^d_+$ and (unweighted) fractional Hardy--Sobolev--Maz'ya inequality on general domains are valid also in the range $1<p<2$. This seems to be new even for the unweighted case $\alpha=\beta=0$. Hardy--Sobolev--Maz'ya inequalities are interesting of its own, because they combine both Hardy and Sobolev inequalities. For the information about the unweighted local and nonlocal Hardy--Sobolev--Maz'ya inequality we refer the Reader to \cite{MR2863763, MR2823046, MR2424899, MR2910984}, and to \cite[Theorem 3]{sharpweighted} for the weighted form of this inequality, valid for $p\geq 2$.

We make  our statement precise below.

\begin{thm}[\textbf{Weighted fractional Hardy--Sobolev--Maz'ya inequality on $\R^d_+$ for $\boldsymbol{1<p<2}$}]\label{tw12}
Let $1<p<2$, $0<s<1$, $1<sp<d$, $\al,\be,\al+\be\in(-1,sp),\,1+\al+\be\neq sp$ and $q=\frac{dp}{d-sp}$. Moreover, we assume that the parameters $\al,\be$ satisfy the additional condition

\begin{equation}\label{condition}
A_{\al,\be,p}:=\inf_{\tau>0}\frac{\tau^{p\al}+\tau^{p\be}}{\tau^{\al+\be}}>0.    
\end{equation} 

Then the following weighted fractional Hardy--Sobolev--Maz'ya inequality,
\begin{align}\label{hsm1<p<2Rd+}
\nonumber
 \int_{\R^{d}_{+}}\int_{\R^{d}_{+}}\frac{|u(x)-u(y)|^{p}}{|x-y|^{d+sp}}&\,x_{d}^{\alpha}\,y_{d}^{\beta}\,dy\,dx-\mathcal{D}\int_{\R^{d}_{+}}\frac{|u(x)|^{p}}{x_{d}^{sp-\alpha-\beta}}\,dx\\
 &\geq C A_{\al,\be,p} \left(\int_{\R^d_+}|u(x)|^{q}x_{d}^{\frac{q}{p}(\al+\be)}\,dx\right)^{\frac{p}{q}},\quad u\in C_c^1(\R^d_+),
\end{align}
holds.  The constant $C$ depends on $s,p$ and $d$ and $\mathcal{D}$ is given by \eqref{D}.
\end{thm}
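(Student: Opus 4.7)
The plan is to reduce Theorem~\ref{tw12} to the (known) weighted fractional Hardy--Sobolev--Maz'ya inequality for $p=2$ from \cite[Theorem~3]{sharpweighted}, via the linearising substitution $g:=u^{p/2}\,x_{d}^{(1+\al+\be-sp)/2}$.

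Using $|u(x)-u(y)|^{p}\ge\bigl||u(x)|-|u(y)|\bigr|^{p}$ and the fact that the right-hand side of~\eqref{hsm1<p<2Rd+} depends only on $|u|$, I may assume $u\ge 0$. Applying Theorem~\ref{tw11} in its non-negative form~\eqref{generalhardyremainder1<p<22} (with constant $p-1$) bounds the left-hand side of~\eqref{hsm1<p<2Rd+} from below by $(p-1)\,\widetilde R[u]$, where $\widetilde R[u]$ is the weighted $L^{2}$ fractional Gagliardo seminorm of $g$ of order $sp/2$ with kernel $|x-y|^{-d-sp}\,V(x,y)\,x_{d}^{\al}y_{d}^{\be}$.

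The crucial step is to bound $\widetilde R[u]$ from below by an ordinary weighted Gagliardo seminorm of $g$ with a genuine tensor-product weight. Splitting the double integral according to the sign of $x_{d}-y_{d}$ and swapping variables in one piece rewrites $\widetilde R[u]$, with $\gamma:=(1+\al+\be-sp)/p$, as a half-domain integral whose weight factorises as $x_{d}^{-\gamma(p-1)}y_{d}^{-\gamma}\bigl(x_{d}^{\al}y_{d}^{\be}+x_{d}^{\be}y_{d}^{\al}\bigr)$. The substitution $\tau=(y_{d}/x_{d})^{1/p}$ identifies the definition~\eqref{condition} of $A_{\al,\be,p}$ with the pointwise estimate
\[
  x_{d}^{\al}y_{d}^{\be}+x_{d}^{\be}y_{d}^{\al}\;\ge\;A_{\al,\be,p}\,x_{d}^{(p-1)(\al+\be)/p}\,y_{d}^{(\al+\be)/p},
\]
converting the half-domain integral into a half-Gagliardo seminorm of $g$ with the product weight $x_{d}^{(p-1)\eta}y_{d}^{\eta}$, where $\eta:=(sp-1)/p$. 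Comparing this weight to its swap on $\{x_{d}\le y_{d}\}$ shows that the corresponding full-domain Gagliardo seminorm is at most twice the half-domain one.

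Applying the $p=2$ weighted Hardy--Sobolev--Maz'ya inequality of \cite[Theorem~3]{sharpweighted} to $g$ with smoothness $sp/2\in(0,1)$ and parameters $\tilde\al:=(p-1)\eta$, $\tilde\be:=\eta$ (both in the admissible range), and discarding the non-negative $p=2$ Hardy remainder, one obtains a lower bound in terms of $\bigl(\int g^{q_{2}}x_{d}^{q_{2}(sp-1)/2}\,dx\bigr)^{2/q_{2}}$ with $q_{2}:=2d/(d-sp)=2q/p$; unwinding $g=u^{p/2}x_{d}^{(1+\al+\be-sp)/2}$ converts this into the desired $\bigl(\int u^{q}x_{d}^{q(\al+\be)/p}\,dx\bigr)^{p/q}$. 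The main obstacle is the previous step, where the non-tensor-product weight $V(x,y)x_{d}^{\al}y_{d}^{\be}$ must be replaced by the simpler product weight $x_{d}^{(p-1)\eta}y_{d}^{\eta}$; the hypothesis $A_{\al,\be,p}>0$ in~\eqref{condition} is precisely what guarantees that this reduction costs only a strictly positive, finite constant rather than collapsing the inequality.
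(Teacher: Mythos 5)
Your reduction strategy closely tracks the paper up to and including the weight estimate: symmetrize the remainder, restrict to the half--domain $\{w(x)\le w(y)\}$, factor the weight, and use the substitution $\tau=(y_d/x_d)^{1/p}$ together with the hypothesis $A_{\al,\be,p}>0$ to replace $x_d^{\al}y_d^{\be}+x_d^{\be}y_d^{\al}$ by the product weight $x_d^{(p-1)\eta}y_d^{\eta}$, where $\eta=(sp-1)/p$. All of that is correct and matches the paper. The gap is at the last step.

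You then claim the parameters $\tilde\al=(p-1)\eta$, $\tilde\be=\eta$, $s'=sp/2$, $p'=2$ are ``in the admissible range'' for \cite[Theorem~3]{sharpweighted}. But compute: $\tilde\al+\tilde\be=p\eta=sp-1$, so $1+\tilde\al+\tilde\be=sp=s'p'$, which is exactly the excluded case $1+\tilde\al+\tilde\be\neq s'p'$ in the hypotheses of \cite[Theorem~3]{sharpweighted} (and of Theorem~\ref{tw12} itself). This is no accident: after your weight reduction the auxiliary exponent $\eta$ is precisely the one for which the corresponding Hardy weight $x_d^{-(1+\tilde\al+\tilde\be-s'p')/p'}$ degenerates to $1$, the $p=2$ Hardy constant $\mathcal{D}$ from \eqref{D} vanishes, and what you actually need is a weighted fractional \emph{Sobolev} inequality
\[
\iint_{\R^d_+\times\R^d_+}\frac{|g(x)-g(y)|^{2}}{|x-y|^{d+sp}}\,x_d^{\tilde\al}y_d^{\tilde\be}\,dy\,dx\;\ge\; C\left(\int_{\R^d_+}|g|^{q_2}\,x_d^{q_2(\tilde\al+\tilde\be)/2}\,dx\right)^{2/q_2},
\]
not an HSM inequality, and not one that \cite[Theorem~3]{sharpweighted} provides. ``Discarding the Hardy remainder'' does not help since that remainder is identically zero here.

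The paper closes exactly this gap differently: it writes $\min\{x_d,y_d\}^{sp-1}=(sp-1)\int_0^\infty\ind_{(t,\infty)}(x_d)\ind_{(t,\infty)}(y_d)t^{sp-2}\,dt$ (coarea/layer--cake, needing $sp>1$), applies the \emph{unweighted} critical Sobolev inequality \cite[Lemma~2.1]{MR2910984} with $p'=2$, $s'=sp/2$ separately on each half--space $\{x_d>t\}$, and then assembles the pieces with Minkowski's integral inequality. This proves the borderline weighted Sobolev inequality directly instead of invoking a theorem whose hypotheses it violates. To salvage your argument you would have to supply a proof of the displayed weighted Sobolev inequality for the degenerate parameters; the cleanest way to do so is precisely the paper's coarea argument, so in effect you would be reproducing the missing step rather than avoiding it.
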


\begin{rem}
 The condition \eqref{condition} is fulfilled for example when the parameters $\al,\be$ satisfy $((p-1)\al-\be)((p-1)\be-\al)\leq 0.$ In this particular case, the constant $CA_{\al,\be,p}$ in \eqref{hsm1<p<2Rd+} does not depend on $\al$ and $\be$. Indeed, one has $A_{\al,\be,p}\geq 1$.
\end{rem}

\begin{thm}[\textbf{Fractional Hardy--Sobolev--Maz'ya inequality on general domains for $\boldsymbol{1<p<2}$}]\label{twHSMgeneraldomains}
  Let $1<p<2$, $0<s<1$ and $1<sp<d$. Then there is a constant $\sigma_{d,s,p}>0$ such that
\begin{equation}\label{hardysobolevmazyageneraldomains}
\int_{\Omega}\int_{\Omega}\frac{|u(x)-u(y)|^p}{|x-y|^{d+sp}}\,dy\,dx-\mathcal{D}\int_{\Omega}\frac{|u(x)|^p}{m_{sp}(x)^{sp}}\,dx\geq\sigma_{d,s,p}\left(\int_{\Omega}|u(x)|^q \,dx\right)^{\frac{p}{q}},    
\end{equation}
for all open and proper $\Omega\subset\R^d$ and all $u\in C_c^{1}(\Omega)$. Here $q=\frac{dp}{d-sp}$, $\mathcal{D}=\mathcal{D}_{d,s,p}$ and $m_\rho$ denotes the pseudodistance
$$
m_{\rho}(x)^{\rho}=\frac{2\pi^{\frac{d}{2}}\Gamma\left(\frac{1+\rho}{2}\right)}{\Gamma\left(\frac{d+\rho}{2}\right)}\left(\int_{\mathbb{S}^{d-1}}\frac{d\omega}{d_{\omega}(x)^{\rho}}\right)^{-1},
$$
where $d_{\omega}(x)=\inf\{|t|:x+t\omega\notin\Omega\}$ for $x\in\R^d$ and $\omega\in\mathbb{S}^{d-1}$. 
\end{thm}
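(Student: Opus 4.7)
The plan is to follow the scheme used for the general-domain fractional Hardy--Sobolev--Maz'ya inequality when $p \ge 2$ (Dyda, Frank--Seiringer), with Proposition~\ref{remainder1<p<2} playing the role of the ground-state remainder. The argument splits into two steps: (a)~producing a Hardy inequality with remainder on $\Omega$ by slicing, and (b)~dominating that remainder from below by a quantity controlled by the fractional Sobolev inequality.

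For (a), I would first use polar coordinates around $x$ to write
\[
\int_\Omega\int_\Omega \frac{|u(x)-u(y)|^p}{|x-y|^{d+sp}}\,dy\,dx = \tfrac{1}{2}\int_{\mathbb{S}^{d-1}} d\omega \int_{\omega^\perp} dx' \int\!\!\int_{\Omega^{x',\omega}\times\Omega^{x',\omega}}\frac{|g(r)-g(r')|^p}{|r-r'|^{1+sp}}\,dr\,dr',
\]
where $g(r)=u(x'+r\omega)$ and $\Omega^{x',\omega}=\{r\in\R:\ x'+r\omega\in\Omega\}$ is a disjoint union of open intervals. Dropping the contributions from pairs in different intervals gives a lower bound. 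On each interval $I$ I would apply Proposition~\ref{remainder1<p<2} in dimension $d=1$ with kernel $|r-r'|^{-1-sp}$ and ground state $w(r)=d_I(r)^{(sp-1)/p}$ (the 1D analogue of Theorem~\ref{tw11}, where the sharp Hardy constant on a bounded interval matches that on a half-line), producing a 1D Hardy inequality plus an $\widetilde E_w$-type remainder. Summing over the intervals and integrating in $(x',\omega)$, and using the identity $\int_{\mathbb{S}^{d-1}} d\omega/d_\omega(x)^{sp} = C_{d,sp}/m_{sp}(x)^{sp}$ built into the definition of $m_{sp}$, the Hardy terms collapse to $\mathcal{D}\int_\Omega|u|^p/m_{sp}^{sp}\,dx$ with the constant matching $\mathcal{D}_{d,s,p}$. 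This yields
\[
\int_\Omega\int_\Omega \frac{|u(x)-u(y)|^p}{|x-y|^{d+sp}}\,dy\,dx - \mathcal{D}\int_\Omega \frac{|u(x)|^p}{m_{sp}(x)^{sp}}\,dx \geq (p-1)\,\mathcal R[u],
\]
where $\mathcal R[u]$ is the reassembled remainder.

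For (b), set $f=|u|^{p/2}$ and aim to show $\mathcal R[u] \geq c\int_{\R^d}\int_{\R^d}(f(x)-f(y))^2|x-y|^{-d-sp}\,dy\,dx$. Once this is in place, the fractional Sobolev inequality at exponent~$2$ and smoothness~$sp/2$ (valid since $0<sp/2<\min(1,d/2)$ under $1<p<2$ and $1<sp<d$) delivers
\[
\int_{\R^d}\int_{\R^d}\frac{(f(x)-f(y))^2}{|x-y|^{d+sp}}\,dy\,dx \geq C_S \|f\|_{L^{2d/(d-sp)}}^2 = C_S\|u\|_{L^q}^p,
\]
using $(2d/(d-sp))(p/2)=q$; combined with (a) this produces \eqref{hardysobolevmazyageneraldomains} with $\sigma_{d,s,p}=(p-1)\,c\,C_S$. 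The main obstacle is precisely this comparison in~(b). The weight $W(x,y)=w(x)w(y)\max(w(x),w(y))^{p-2}$ in $\widetilde E_w$ does not admit any pointwise lower bound of the form $c(w(x)w(y))^{p/2}$ for $1<p<2$, so a direct factorization is unavailable. One has to exploit the averaging over directions $\omega\in\mathbb{S}^{d-1}$ and focus on pairs where $w(x)\asymp w(y)$ --- the regime that effectively reconstructs the isotropic kernel $|x-y|^{-d-sp}$ upon reassembly --- estimating the remaining pairs by positivity.
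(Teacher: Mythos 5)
Your step~(a) --- slicing into lines, applying the one-dimensional ground-state representation on each component interval, and reassembling the Hardy terms into $\mathcal{D}\int_\Omega|u|^p m_{sp}(x)^{-sp}\,dx$ via the definition of $m_{sp}$ --- is exactly what the paper's Lemma~\ref{lem8} and Corollary~\ref{cor9} accomplish, except for one omission: on a bounded interval the Hardy potential strictly exceeds the sharp half-line constant, so Lemma~\ref{lem8} records a \emph{second} lower-order remainder $\int_0^1 W_{p,s}(x)|v(x)|^p w(x)^p\,dx$ in addition to the $\widetilde{E}_w$-type term. This extra term, with $W_{p,s}$ bounded away from zero, is essential later and your proposal drops it.

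The real problem is step~(b), and you have in fact already diagnosed it yourself: there is no pointwise lower bound $W(x,y)\geq c\,(w(x)w(y))^{p/2}$ for $1<p<2$, so the remainder cannot be compared directly to a weighted Gagliardo seminorm of $|u|^{p/2}$ and the program you outline stalls. The comparison you aim for --- $\mathcal{R}[u]\gtrsim \iint (|u(x)|^{p/2}-|u(y)|^{p/2})^2|x-y|^{-d-sp}\,dy\,dx$ followed by the order-$sp/2$ fractional Sobolev inequality --- is precisely what the paper does for the \emph{half-space} (Theorem~\ref{tw12}), and it works there only because of the monotone layer structure $\min\{x_d,y_d\}^{sp-1}=(sp-1)\int_0^\infty\ind_{(t,\infty)}(x_d)\ind_{(t,\infty)}(y_d)t^{sp-2}\,dt$, which lets one integrate a family of Sobolev inequalities on nested half-spaces. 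No such coarea/layer-cake structure is available for a general open $\Omega$; worse, for non-convex $\Omega$ two points $x,y\in\Omega$ may lie in different components of a slice $\Omega\cap\ell_{x',\omega}$, so their contribution to the Gagliardo form never appears in the reassembled remainder $\mathcal{R}[u]$ at all. Your suggested fix (``exploit averaging over directions, focus on $w(x)\asymp w(y)$, discard the rest by positivity'') is a hope rather than an argument and does not close this gap.

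What the paper actually does instead is entirely one-dimensional on the Sobolev side. Lemma~\ref{lem10} converts the one-dimensional Hardy gap on an interval into a pointwise bound
\[
\|f\|_\infty^{p+q(sp-1)}\leq c\left(\int\!\!\int\frac{|f(x)-f(y)|^p}{|x-y|^{1+sp}}\,dy\,dx-\mathcal{D}_{1,s,p}\int\frac{|f(x)|^p}{\dist(x,\partial\Omega)^{sp}}\,dx\right)\|f\|_q^{q(sp-1)},
\]
whose proof consumes both the $\widetilde{E}_w$-remainder and the $W_{p,s}$-weighted term from Corollary~\ref{cor9}; this is where the change from $|v(x)-v(y)|^p w(x)^{p/2}w(y)^{p/2}$ to $\left(v(x)^{\langle p/2\rangle}-v(y)^{\langle p/2\rangle}\right)^2 U(x,y)$ has to be handled, and the paper does so by working on the block $(I\times I)\cap\{w(x)<w(y)\}$ and invoking the corresponding auxiliary estimate from \cite{MR2910984} with $p'=2$, $s'=sp/2$. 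With Lemma~\ref{lem10} in hand, Theorem~\ref{twHSMgeneraldomains} follows verbatim from the Maz'ya-type level-set argument in \cite[Proof of Theorem 1.1]{MR2910984}: integrate the one-dimensional sup-norm bounds over lines and levels to control $|\{|u|>\tau\}|$, and hence $\|u\|_q$, directly from the Hardy gap. This route never requires a global $d$-dimensional Sobolev inequality for $|u|^{p/2}$ and is therefore insensitive to the non-monotone, non-explicit nature of the distance weight on a general domain. You should abandon the half-space-style step~(b) and replace it with a $1<p<2$ version of the Dyda--Frank 1D sup-norm lemma and the subsequent level-set argument.
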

For convex $\Omega$ we have $m_\rho (x)\leq\dist(x,\partial\Omega)$ (see \cite{LossSloane}), hence, \eqref{hardysobolevmazyageneraldomains} can be rewritten using the standard distance to the boundary.

\begin{rem}
It is not clear if the (unweighted or weighted) fractional Hardy--Sobolev--Maz'ya inequality on the half-space and general unbounded domains is valid for the range of parameters $0<sp<1$. Our proofs of Theorems \ref{tw12} and \ref{twHSMgeneraldomains} does not resolve this case and we have not been able to find in the literature any information on this situation. Interestingly, according to our best knowledge, it is also not known if the local version of the Hardy--Sobolev--Maz'ya inequality is satisfied, when $1<p<2$. 
\end{rem}
\begin{rem}
We believe that it is also possible to obtain a weighted form of the inequality \eqref{hardysobolevmazyageneraldomains}. To this end, one should properly modify all the technical results from \cite{MR2910984}. However, such weighted Hardy--Sobolev--Maz'ya inequalities were not investigated even for the case $p\geq 2$.
\end{rem}

\noindent
\textbf{Acknowledgement.} The authors would like to thank the anonymous referee for numerous comments, which led to an improvement of the manuscript.

\section{Constants}
The two important constants that we use throughout the paper are given by
\begin{equation}\label{C}
\mathcal{C}=\mathcal{C}(d,s,p,\alpha,\beta)=\int_{0}^{1}r^{sp-1}\left(r^{\alpha}+r^{\beta}\right)\left|1-r^{\frac{d-\alpha-\beta-sp}{p}}\right|^{p}\Phi_{d,s,p}(r)\,dr.
\end{equation}
where $\al,\be,\al+\be\in(-sp,d)$, the function $\Phi_{d,s,p}$ is defined by the formula
\begin{equation}
\Phi_{d,s,p}(r)=\begin{cases}
\left|\mathbb{S}^{d-2}\right|\displaystyle\int_{-1}^{1}\frac{\left(1-t^{2}\right)^{\frac{d-3}{2}}}{\left(1-2tr+ r^2
  \right)^{\frac{d+sp}{2}}}\,dt,\,d\geq 2 \\
(1-r)^{-1-sp}+(1+r)^{-1-sp},\,d=1.
\end{cases}
\end{equation}
and
\begin{equation}\label{D}
\mathcal{D}=\mathcal{D}(d,s,p,\alpha,\beta)=\frac{\pi^{\frac{d-1}{2}}\Gamma\left(\frac{1+sp}{2}\right)}{\Gamma\left(\frac{d+sp}{2}\right)}\int_{0}^{1}\frac{\left(t^{\alpha}+t^{\beta}\right)\left|1-t^{-\frac{1+\alpha+\beta-sp}{p}}\right|^{p}}{(1-t)^{1+sp}}\,dt,
\end{equation}
where $\al,\be,\al+\be\in(-1,sp)$. We will also use the notation $\mathcal{D}_{d,s,p}:=\mathcal{D}(d,s,p,0,0)$ For $p=2$ the constants above reduce to expressions involving only Beta and Gamma functions, see \cite[Propositions 8 and 10]{sharpweighted}. As it was shown in \cite{sharpweighted}, these constants are the largest for which the left hand side of \eqref{HardyRd} or \eqref{HardyHS} stays nonnegative for all compactly supported functions~$u$.

\section{Proofs}

To prove  Proposition~\ref{remainder1<p<2}, we need a purely analytical result, being an analogue of the inequality \cite[(2.16)]{MR2469027}.
\begin{prop}\label{proposition}
Let $a\in\R$, $0\leq t\leq 1$ and $1<p<2$. Then the following inequality holds,
\begin{equation}\label{prop81}
|a-t|^{p}\geq (1-t)^{p-1}\left(|a|^p-t\right)+C_p t\left(a^{\langle p/2 \rangle}-1\right)^2,
\end{equation}
where 
\begin{equation}\label{Cp}
C_p=\max\left\{\frac{p-1}{p},\frac{p(p-1)}{2}\right\}.
\end{equation}
Moreover, for $a\geq 0$ it holds
\begin{equation}\label{prop81pos}
|a-t|^{p}\geq (1-t)^{p-1}\left(|a|^p-t\right)+(p-1)t\left(a^{p/2}-1\right)^2. 
\end{equation}
\end{prop}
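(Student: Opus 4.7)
The plan is to fix $t \in (0,1)$ and study the function
\[
F_t(a) := |a-t|^p - (1-t)^{p-1}(|a|^p - t) - Ct\,(a^{\langle p/2\rangle} - 1)^2, \quad a \in \R,
\]
for the appropriate constant $C$ depending on which inequality is being proved; the endpoint values $t = 0$ and $t = 1$ can be checked directly.

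I would first treat \eqref{prop81pos}, where $a \geq 0$ and $C = p-1$. Direct differentiation gives $F_t(1) = F_t'(1) = 0$ and
\[
F_t''(1) = p(p-1)\,t\,\bigl[(1-t)^{p-2} - p/2\bigr] \geq 0
\]
since $(1-t)^{p-2} \geq 1 > p/2$ for $1<p<2$, so $a = 1$ is at least a local minimum. To globalize, I would split $[0,\infty)$ at $a = t$, compute $F_t''$ explicitly on each subinterval, and use the Taylor representation $F_t(a) = \int_1^a (a-s)\, F_t''(s)\, ds$ combined with boundary checks. At $a=0$ the inequality reduces to $t^{p-1} + (1-t)^{p-1} \geq p-1$, which holds since the left-hand side is $\geq 1$ (using $x^{p-1}\geq x$ for $x \in [0,1]$ when $p-1 \leq 1$); the asymptotic as $a \to \infty$ reduces to $1 - (1-t)^{p-1} \geq (p-1)t$, which is exactly Bernoulli's inequality for the concave exponent $p - 1 \in (0,1)$.

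For \eqref{prop81} with general $a \in \R$ and $C = C_p$, the case $a \geq 0$ follows immediately from \eqref{prop81pos} since $C_p \leq p-1$ (both $(p-1)/p \leq p-1$ and $p(p-1)/2 \leq p-1$ hold for $p \leq 2$). The case $a < 0$, after setting $b := -a \geq 0$ and using $(a^{\langle p/2\rangle}-1)^2 = (b^{p/2}+1)^2$, becomes
\[
(b+t)^p \geq (1-t)^{p-1}(b^p - t) + C_p\, t\, (b^{p/2}+1)^2, \quad b \geq 0.
\]
This cannot be derived from \eqref{prop81pos}, because the remainder now uses $(b^{p/2}+1)^2$ rather than $(b^{p/2}-1)^2$. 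I would prove it via two independent arguments, each producing one of the two candidates defining $C_p$: a second-order Taylor/convexity argument yielding $p(p-1)/2$ (sharper when $p \in (\sqrt{2}, 2)$), and a separate analytical estimate (likely built on a H\"older- or Clarkson-type inequality) yielding $(p-1)/p$ (sharper when $p \in (1,\sqrt{2})$); the maximum then gives the stated $C_p$.

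The main obstacle is the $a < 0$ case. Asymptotically as $b \to \infty$ the leading-order bound $(1 - (1-t)^{p-1})\, b^p \geq C_p\, t\, b^p$ forces $C_p \leq p-1$ (by Bernoulli) and is essentially tight, while the subleading terms $pt\, b^{p-1}$ (on the left) and $2C_p t\, b^{p/2}$ (on the right) grow with incomparable rates since $p-1 < p/2$ for $p < 2$; the proof must therefore exploit the slack in the leading comparison very carefully to dominate the subleading contributions. Handling the transitional regime $b \sim 1$ and matching the two constants $(p-1)/p$ and $p(p-1)/2$ at their crossover $p = \sqrt{2}$ constitutes the delicate technical core of the argument.
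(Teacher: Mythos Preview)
Your structural plan aligns well with the paper: you split into $a \geq 0$ and $a < 0$, reduce \eqref{prop81} for $a \geq 0$ to \eqref{prop81pos} via $C_p \leq p-1$, and for $a < 0$ you correctly anticipate two independent bounds producing $(p-1)/p$ and $p(p-1)/2$. This is exactly the paper's architecture; for $a<0$ the paper uses the Taylor bound $(x+t)^p \geq x^p + pt\, x^{p-1} + \tfrac{p(p-1)}{2}t^2(x+t)^{p-2}$ for the constant $p(p-1)/2$ (followed by a nontrivial one-variable analysis) and the simpler inequality $(x+t)^p \geq x^p + t^p$ for the constant $(p-1)/p$, so your guesses there are on target.

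The substantive difference is the case $a>0$ in \eqref{prop81pos}. The paper does \emph{not} study $F_t$ directly; it works with the ratio
\[
f(a,t)=\frac{|a-t|^p-(1-t)^{p-1}(|a|^p-t)}{t\,(a^{p/2}-1)^2},
\]
proves that $f(\cdot,t)$ is decreasing on $(1,\infty)$ (a lengthy calculation), hence $f(a,t)\geq \lim_{a\to\infty}f(a,t)=\tfrac{1-(1-t)^{p-1}}{t}\geq p-1$, and then handles $0<a<1$ by showing $f(a,t)\geq f(1/a,t)$.

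Your alternative---Taylor expansion of the difference $F_t$ about $a=1$---has a genuine gap. A direct computation gives, for $0<a<t$,
\[
F_t''(a)=p(p-1)(t-a)^{p-2}-p(p-1)(1-t)^{p-1}a^{p-2}-(p-1)t\Bigl[p(p-1)a^{p-2}+p\bigl(1-\tfrac{p}{2}\bigr)a^{p/2-2}\Bigr],
\]
and since $p/2-2<p-2<0$ the last term forces $F_t''(a)\to-\infty$ as $a\to 0^+$. Thus $F_t$ is \emph{not} convex on $(0,t)$, and the representation $F_t(a)=\int_1^a (a-s)\,F_t''(s)\,ds$ does not by itself yield $F_t\geq 0$ there. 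Your boundary checks at $a=0$ and $a\to\infty$ are correct, but together with ``$a=1$ is a local minimum'' they do not exclude a negative dip of $F_t$ somewhere in $(0,1)$. To rescue this route you would need an additional argument---for instance, that $F_t'$ has at most one zero in $(0,1)$---which is plausible but is essentially the same amount of work the paper carries out via monotonicity of the ratio.
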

 
\begin{rem}
  After finishing present article we learned that  \cite[Lemma 3.8]{MR4597627} contains a~result
  similar to our Proposition~\ref{proposition}, with different right-hand side and stronger in the sense that it gives double-sided
  inequality with different constants. However, in that result the right-hand side of \eqref{prop81}
  is of more complicated form not readily suitable for our applications, and it is also not
  straightforward to prove  Proposition~\ref{proposition} using  \cite[Lemma 3.8]{MR4597627}.
\end{rem}

\begin{rem}
The constant $C_p$ in the inequality \eqref{prop81} is very likely not sharp. However, it seems difficult  to find explicitly the best constant in this setting, as numerical computations show. The constant $p-1$ in \eqref{prop81pos} is optimal, which follows from its proof (consider  $a\rightarrow\infty,\,t\rightarrow 0^+$).
\end{rem}
An elementary, but rather technical proof is given in the Appendix.

\begin{proof}[Proof of Proposition~\ref{remainder1<p<2}]
We follow \cite[Proof of Proposition 2.2]{MR2469027}, with $t=w(y)/w(x)$ (assuming without loss of generality that $t\leq 1$) and $a=v(x)/v(y)$ and use Proposition \ref{proposition}. The property $\sgn(a/b)=\sgn(a)/\sgn(b)$ is used here.  To prove \eqref{generalhardyremainder1<p<23}, we also use the triangle inequality $|a-t|\geq ||a|-t|$, valid for all $a\in\mathbb{C}$.
\end{proof}

\begin{proof}[Proof of Theorems \ref{tw10} and \ref{tw11}]
This is a direct consequence of the general inequalities \eqref{generalhardyremainder1<p<21} and \eqref{generalhardyremainder1<p<22},\cite[Lemmas 8 and 10]{sharpweighted} and previous results.  Let us now briefly discuss the details of the proof.  In \cite[Lemma 8]{sharpweighted} we proved that if $\al,\be,\al+\be\in(-sp,d)$ and $w_1(x)=|x|^{-(d-\al-\be-sp)/p}$, then 
\begin{align*}
V_1(x)&:=2\lim_{\varepsilon\rightarrow 0}\int_{||x|-|y||>\varepsilon}\left(w_1(x)-w_1(y)\right)|w_1(x)-w_1(y)|^{p-2}k(x,y)\,dy\\
&=\frac{\mathcal{C}(d,s,p,\alpha,\beta)}{|x|^{sp+\alpha+\beta}}w_1(x)^{p-1}
\end{align*}
uniformly on compact sets contained in $\R^d\setminus\{0\}$. Using  Proposition \ref{remainder1<p<2} we obtain the result of Theorem \ref{tw10}. For the case of the halfspace, the proof follows the same pattern. By \cite[Lemma 10]{sharpweighted}, for $\al,\be,\al+\be\in(-1,sp)$, $\al+\be+sp\neq 1$ and $w_2(x)=x_d^{-(1+\al+\be-sp)/p}$ we have
\begin{align*}
V_2(x)&:=2\lim_{\varepsilon\rightarrow 0}\int_{|x_{d}-y_{d}|>\varepsilon,\,y_{d}>0}\left(w_2(x)-w_2(y)\right)|w_2(x)-w_2(y)|^{p-2}k(x,y)\,dy\\
&=\frac{\mathcal{D}(d,s,p,\alpha,\beta)}{x_{d}^{sp-\alpha-\beta}}w_2(x)^{p-1},
\end{align*}
uniformly on compact sets contained in $\R^d_{+}$. Hence, Proposition \ref{remainder1<p<2} again implies Theorem~\ref{tw11}.

\end{proof}
\begin{proof}[Proof of Theorem \ref{tw12}]
  The proof is analogous to the proof of the fractional Hardy--Sobolev--Maz'ya inequality for $p\geq2$ from \cite[Proof of Theorem 3]{sharpweighted}, however, there are some minor changes. We present the proof for real-valued functions; the calculations for complex-valued functions are similar. The starting point in proving the statement is of course Theorem \ref{tw11}. We keep  the notation from Theorem \ref{tw11} and put $w(x)=x_d^{-(1+\al+\be-sp)/p}$.
  We have by symmetry of $V(x,y)=\min\{w(x),w(y)\}\max\{w(x),w(y)\}^{p-1}$ that the remainder in the weighted fractional Hardy inequality satisfies
\begin{align*}
\widetilde{E}_{w}[u]&=\int_{\R^d_+}\int_{\R^d_+}\frac{\left(v(x)^{\langle p/2 \rangle}-v(y)^{\langle p/2 \rangle}\right)^{2}}{|x-y|^{d+sp}}V(x,y)x_d^{\al}y_d^{\be}\,dy\,dx\\
&=\frac{1}{2}\int_{\R^d_+}\int_{\R^d_+}\frac{\left(v(x)^{\langle p/2 \rangle}-v(y)^{\langle p/2 \rangle}\right)^{2}}{|x-y|^{d+sp}}V(x,y)\left(x_d^{\al}y_d^{\be}+x_d^{\be}y_d^{\al}\right)\,dy\,dx\\
&\geq\frac{1}{2}\iint_{\{w(x)\leq w(y)\}}\frac{\left(v(x)^{\langle p/2 \rangle}-v(y)^{\langle p/2 \rangle}\right)^{2}}{|x-y|^{d+sp}}\\
&\times x_d^{^{\frac{sp-1}{p}}}y_d^{\frac{(p-1)(sp-1)}{p}}\left[\left(\frac{x_d}{y_d}\right)^{\frac{(p-1)\al-\be}{p}}+ \left(\frac{x_d}{y_d}\right)^{ \frac{(p-1)\be-\al}{p}}\right]\,dy\,dx.
\end{align*}
We estimate in the same way as in \cite{MR2910984},
\begin{align*}
  x_d^{^{\frac{sp-1}{p}}}y_d^{\frac{(p-1)(sp-1)}{p}}&\geq
  \min\{x_d,y_d\}^{sp-1} =
  (sp-1)\int_{0}^{\infty}\ind_{(t,\infty)}(x_d)\ind_{(t,\infty)}(y_d)t^{sp-2}\,dt.
\end{align*}
By making use of the assumption \eqref{condition} with $\tau=(x_d/y_d)^{1/p}$, we conclude that
$\left(x_d/y_d\right)^{\frac{(p-1)\al-\be}{p}}+ \left(x_d/y_d\right)^{ \frac{(p-1)\be-\al}{p}}\geq A_{\al,\be,p}>0$.
Therefore, by symmetry of the integrand, $\widetilde{E}_{w}[u]$ dominates
\begin{align*}
&\frac{A_{\al,\be,p}}{2} \int_{0}^{\infty}t^{sp-2}\,dt\iint_{\{w(x)\leq w(y)\}}\frac{\left(v(x)^{\langle p/2 \rangle}-v(y)^{\langle p/2 \rangle}\right)^{2}}{|x-y|^{d+sp}}\ind_{(t,\infty)}(x_d)\ind_{(t,\infty)}(y_d)\,dy\,dx\\
&=\frac{A_{\al,\be,p}}{4}\int_{0}^{\infty}t^{sp-2}\,dt\int_{\R^d}\int_{\R^d}\frac{\left(v(x)^{\langle p/2 \rangle}-v(y)^{\langle p/2 \rangle}\right)^{2}}{|x-y|^{d+sp}}\ind_{(t,\infty)}(x_d)\ind_{(t,\infty)}(y_d)\,dy\,dx\\
&=\frac{A_{\al,\be,p}}{4}\int_{0}^{\infty}t^{sp-2}\,dt\int_{\{x_d>t\}}\int_{\{y_d>t\}}\frac{\left(v(x)^{\langle p/2 \rangle}-v(y)^{\langle p/2 \rangle}\right)^{2}}{|x-y|^{d+sp}}\,dy\,dx.
\end{align*}

By the critical Sobolev inequality \cite[Lemma 2.1]{MR2910984}, see also \cite[Corollary 2]{MR2607491}, with the parameters $s'=\frac{sp}{2}$, $p'=2$ and $q'=\frac{dp'}{d-s'p'}=\frac{2d}{d-sp}$ the above dominates (up to a constant depending only on $d$, $s$ and $p$)
\begin{align*}
\int_{0}^{\infty}\left(\int_{\{x_d>t\}}|v(x)^{\langle p/2\rangle}|^{q'}\,dx\right)^{\frac{p'}{q'}}t^{sp-2}\,dt=\int_{0}^{\infty}\left(\int_{\{x_d>t\}}|v(x)|^q\,dx\right)^{\frac{q}{p}}t^{sp-2}\,dt.
\end{align*}
Recalling the definition of $v$ and Minkowski's inequality yield the result.
\end{proof}

In order to prove the fractional Hardy--Sobolev--Maz'ya inequality for general domains, we first need to generalize technical Lemmas from \cite{MR2910984} to our case of $1<p<2$.

\begin{lem}\label{lem8}
Let $0<s<1$ and $1<p<2$ with $sp>1$. Then, for all $f\in C^1((0,1))$ with $f(0)=0$ it holds
\begin{align}\label{int01}
&\int_{0}^{1}\int_{0}^{1}\frac{|f(x)-f(y)|^p}{|x-y|^{1+sp}}\,dy\,dx-\mathcal{D}_{1,s,p}\int_{0}^{1}\frac{|f(x)|^p}{x^{sp}}\,dx\\
\nonumber 
&\geq C_p\int_{0}^{1}\int_{0}^{1}\frac{\left(v(x)^{\langle p/2\rangle}-v(y)^{\langle p/2\rangle}\right)^2}{|x-y|^{1+sp}}U(x,y)\,dy\,dx+\int_{0}^{1}W_{p,s}(x)|v(x)|^p w(x)^p\,dx,
\end{align}
where $C_p$ is given by \eqref{Cp}, $w(x)=x^{\frac{sp-1}{p}}$, $v=f/w$ and
\[
U(x,y)=\min\{w(x),w(y)\}\max\{w(x),w(y)\}^{p-1}.
\]
The function $W_{p,s}$ is like in \cite[Lemma 4.3]{MR2910984}, i.e.,
it is bounded away from zero and satisfies
\[
W_{p,s}(x) \approx x^{-(p-1)(ps-1)/p} \qquad\text{for }  x\rightarrow 0^+
\]
and
\[
W_{p,s}(x) \approx
\begin{cases}
 1 & \text{if}\ p-1-ps>0 \,, \\
|\log(1-x)| & \text{if} \ p-1-ps=0 \,, \\
(1-x)^{-1-ps+p}  & \text{if} \ p-1-ps<0 \,,
\end{cases}
\qquad \text{for } x\rightarrow 1^-.
\]
\end{lem}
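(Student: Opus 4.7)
The plan is to apply the ground-state representation of Proposition~\ref{remainder1<p<2} directly on $\Omega=(0,1)$ with the weight $w(x)=x^{(sp-1)/p}$, and to read off the extra weight $W_{p,s}$ as the positive gap between the resulting Hardy potential and the half-line potential $\mathcal{D}_{1,s,p}\,x^{-sp}$ arising in Theorem~\ref{tw11} with $d=1$, $\alpha=\beta=0$.

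More concretely, I would first define
\[
V^{(0,1)}(x):=2w(x)^{-p+1}\lim_{\varepsilon\to 0^+}\int_{\{y\in(0,1):\,|y-x|>\varepsilon\}}\frac{(w(x)-w(y))|w(x)-w(y)|^{p-2}}{|x-y|^{1+sp}}\,dy,
\]
check absolute convergence and (after a standard cut-off/density argument to handle the mild regularity of $f$) apply Proposition~\ref{remainder1<p<2} to obtain
\[
\int_0^1\int_0^1\frac{|f(x)-f(y)|^p}{|x-y|^{1+sp}}\,dy\,dx-\int_0^1|f(x)|^p V^{(0,1)}(x)\,dx\geq C_p\widetilde{E}_{w}[v].
\]
The analogous integral taken over $(0,\infty)$ was computed in \cite[Lemma~10]{sharpweighted} (as recalled in the proof of Theorem~\ref{tw11}) to equal $\mathcal{D}_{1,s,p}\,x^{-sp}$. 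Since $sp>1$ makes $w$ strictly increasing, for $y>1>x$ we have $w(x)<w(y)$, and subtracting the two integrals yields the explicit formula
\[
W_{p,s}(x):=V^{(0,1)}(x)-\mathcal{D}_{1,s,p}\,x^{-sp}=2w(x)^{-p+1}\int_1^\infty\frac{(w(y)-w(x))^{p-1}}{(y-x)^{1+sp}}\,dy\geq 0.
\]
Plugging this back and using $|f|^p=|v|^p w^p$ converts the $V^{(0,1)}$-term into the desired sum of the half-line potential and the $W_{p,s}$-weighted remainder.

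The main obstacle is the asymptotic analysis of $W_{p,s}$. Near $x=0^+$, $w(x)\to 0$ and dominated convergence gives $\int_1^\infty (w(y)-w(x))^{p-1}/(y-x)^{1+sp}\,dy\to \int_1^\infty y^{(sp-1)(p-1)/p-1-sp}\,dy$, which is finite by a direct exponent check, and the prefactor $w(x)^{-p+1}=x^{-(p-1)(sp-1)/p}$ then produces the claimed blow-up. Near $x=1^-$, I would split the integral at $y=2$; the tail piece is uniformly bounded. On $[1,2]$, a first order Taylor expansion of $w$ gives $w(y)-w(x)\approx \frac{sp-1}{p}(y-x)$, so the integrand behaves like $(y-x)^{p-2-sp}$, and integrating over $y\in[1,2]$ with $x\to 1^-$ produces the three regimes: a bounded contribution if $p-1-sp>0$, a logarithmic one if $p-1-sp=0$, and one $\approx(1-x)^{p-1-sp}$ if $p-1-sp<0$, exactly as claimed. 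The strict positivity of $W_{p,s}$ on compact subintervals of $(0,1)$ then follows from continuity and pointwise positivity.
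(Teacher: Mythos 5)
Your approach matches the paper's: both apply the ground‐state representation of Proposition~\ref{remainder1<p<2} on $\Omega=(0,1)$ with $w(x)=x^{(sp-1)/p}$, and then lower‐bound the resulting potential by $\mathcal{D}_{1,s,p}x^{-sp}+W_{p,s}(x)$. The paper simply asserts that this potential estimate ``can be done in exactly the same way as in the proof of \cite[Lemma 4.3]{MR2910984}'' and omits the details, whereas you actually spell them out: you identify $W_{p,s}(x)=V^{(0,1)}(x)-\mathcal{D}_{1,s,p}x^{-sp}=2w(x)^{1-p}\int_1^\infty (w(y)-w(x))^{p-1}(y-x)^{-1-sp}\,dy$ (correct, using $sp>1$ so that $w$ is increasing and the $y>1>x$ integrand has a definite sign), check nonnegativity, and work out the asymptotics at both endpoints, all of which are consistent with the stated behaviour of $W_{p,s}$ and with \cite[Lemma 4.3]{MR2910984}. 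Your computation near $x\to 0^+$ correctly yields the prefactor $w(x)^{1-p}=x^{-(p-1)(sp-1)/p}$ times a finite integral (the convergence at $\infty$ uses $sp+p>1$), and near $x\to 1^-$ the split at $y=2$ with the Taylor expansion of $w$ reproduces the three regimes. The only point both you and the paper treat lightly is the regularity needed to invoke Proposition~\ref{remainder1<p<2} for $f\in C^1((0,1))$ with $f(0)=0$ (which need not be compactly supported in $(0,1)$); you at least acknowledge this with a density remark. In short, this is the same route as the paper, just with the deferred reference filled in.
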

\begin{proof}
By the ground state representation \eqref{generalhardyremainder1<p<21},
\begin{align*}
\int_0^1 \int_0^1 \frac{|f(x)-f(y)|^p}{|x-y|^{1+ps} } \,dx\,dy
&\geq \int_0^1 V(x) |f(x)|^p\,dx\\
&+  C_p\int_{0}^{1}\int_{0}^{1}\frac{\left(v(x)^{\langle p/2\rangle}-v(y)^{\langle p/2\rangle}\right)^2}{|x-y|^{1+sp}}U(x,y)\,dy\,dx
\end{align*}
with
\[
V(x) := 2 \omega(x)^{-p+1} \int_0^1 \left(\omega(x) -\omega(y)\right)
\left|\omega(x) - \omega(y) \right|^{p-2} |x-y|^{-1-ps}\,dy
\]
(understood as principal value integral).
It suffices to prove that $V(x) \geq \mathcal{D}_{1,s,p} x^{-sp} + W_{p,s}(x)$, with $W_{p,s}$ defined above. This can be done
in exactly the same way as in the proof of \cite[Lemma 4.3]{MR2910984}, as it turns out that it is enough to assume $p>1$ in this part of the proof. We omit the details.
\end{proof}
The following result is an analogue of \cite[Corollary 4.4]{MR2910984}.
\begin{cor}\label{cor9}
Let $0<s<1$ and $1<p<2$ with $sp>1$. Then, for all $f$ with $f(-1)=f(1)=0$ it holds
\begin{align}
&\int_{-1}^{1}\int_{-1}^{1}\frac{|f(x)-f(y)|^p}{|x-y|^{1+sp}}\,dy\,dx-\mathcal{D}_{1,s,p}\int_{-1}^{1}\frac{|f(x)|^p}{(1-|x|)^{sp}}\,dx \label{eq:cor-w}\\
\nonumber 
&\geq C_p\left(\int_{-1}^{0}\int_{-1}^{0}+\int_{0}^{1}\int_{0}^{1}\right)\frac{\left(v(x)^{\langle p/2\rangle}-v(y)^{\langle p/2\rangle}\right)^2}{|x-y|^{1+sp}}U(x,y)\,dy\,dx+c_{p,s}\int_{-1}^{1}|v(x)|^p w(x)\,dx, \nonumber
\end{align}
where $w(x)=\left(1-|x|\right)^{\frac{sp-1}{p}}$, $v=f/w$ and $U(x,y)=\min\{w(x),w(y)\}\max\{w(x),w(y)\}^{p-1}$.
\end{cor}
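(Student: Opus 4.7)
The plan is to reduce Corollary~\ref{cor9} to two applications of Lemma~\ref{lem8}, one on each half-interval, after reflecting the endpoints $\pm 1$ onto the origin. First I split
\[
\int_{-1}^{1}\int_{-1}^{1} = \int_{0}^{1}\int_{0}^{1} + \int_{-1}^{0}\int_{-1}^{0} + 2\int_{-1}^{0}\int_{0}^{1}
\]
in the Gagliardo double integral on the left of~\eqref{eq:cor-w}. Since the integrand $|f(x)-f(y)|^{p}|x-y|^{-1-sp}$ is pointwise nonnegative, the cross piece contributes nonnegatively and may simply be discarded. The Hardy term $\mathcal{D}_{1,s,p}\int_{-1}^{1}|f(x)|^{p}(1-|x|)^{-sp}\,dx$ splits naturally as $\int_{-1}^{0}+\int_{0}^{1}$, with weight $(1+x)^{-sp}$ on the left half and $(1-x)^{-sp}$ on the right.

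Next I apply the changes of variables $u=1-x$, $t=1-y$ on the $(0,1)^{2}$ piece and $u=1+x$, $t=1+y$ on the $(-1,0)^{2}$ piece. Each such map is an isometry of $|x-y|$, converts $1-|x|$ into $u$, and turns the condition $f(\pm 1)=0$ into $\tilde f(0)=0$ for $\tilde f(u):=f(x)$. Lemma~\ref{lem8} then applies verbatim to $\tilde f$ on each half, producing the Hardy term with the sharp constant $\mathcal{D}_{1,s,p}$, the Gagliardo-type remainder with kernel $U(u,t)$, and a positive single-integral remainder $\int_{0}^{1}W_{p,s}(u)|\tilde v(u)|^{p}u^{sp-1}\,du$. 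Reversing the substitutions converts $u^{(sp-1)/p}$ back into $w(x)=(1-|x|)^{(sp-1)/p}$, so the Gagliardo-type remainders on the two halves combine to give precisely the first term on the right-hand side of~\eqref{eq:cor-w}.

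What remains is to show that the two single-integral remainders, which together read $\int_{-1}^{1}W_{p,s}(1-|x|)|v(x)|^{p}(1-|x|)^{sp-1}\,dx$, dominate $c_{p,s}\int_{-1}^{1}|v(x)|^{p}w(x)\,dx$. This reduces to the pointwise lower bound $W_{p,s}(u)\,u^{sp-1}\geq c\,u^{(sp-1)/p}$ on $(0,1)$, equivalently $W_{p,s}(u)\geq c\,u^{-(p-1)(sp-1)/p}$; this is the only delicate step. Near $u=0^{+}$ the asymptotic $W_{p,s}(u)\approx u^{-(p-1)(sp-1)/p}$ supplied by Lemma~\ref{lem8} gives equality of orders, while on any interval $[\delta,1)$ the power $u^{-(p-1)(sp-1)/p}$ is bounded above and $W_{p,s}$ is bounded away from zero in all three $p-1-ps$ regimes, so the inequality holds uniformly there with a constant depending only on $s$ and $p$. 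Nothing else in the argument is sensitive to the restriction $1<p<2$; that restriction enters only through the form of Lemma~\ref{lem8} itself.
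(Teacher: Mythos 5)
Your proposal is correct and follows essentially the same route as the paper: split the double integral over $(-1,1)^2$ into quadrants (equivalently, apply Lemma~\ref{lem8} to the reflected functions $f(1-x)$ and $f(x-1)$ and add), discard the nonnegative cross terms, change variables to land on the original domain, and then absorb the single-integral remainders via the pointwise bound $W_{p,s}(1-|x|)(1-|x|)^{sp-1}\geq c_{p,s}(1-|x|)^{(sp-1)/p}$. One very minor remark: under the standing assumptions $sp>1$ and $1<p<2$ one automatically has $p-1-ps<0$, so only the third asymptotic regime of $W_{p,s}$ near $1^{-}$ is actually relevant; the paper invokes precisely this observation, whereas you note the bound holds in all three regimes, which is true but unnecessary here.
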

\begin{proof}

By adding the inequalities \eqref{int01} for $f_1(x)=f(1-x)$ and $f_2(x)=f(x-1)$ and changing the variables, we obtain that the left hand side of \eqref{eq:cor-w} is bounded from below
by
\begin{align*}
C_p\left(\int_{-1}^{0}\int_{-1}^{0}+\int_{0}^{1}\int_{0}^{1}\right)&\frac{\left(v(x)^{\langle p/2\rangle}-v(y)^{\langle p/2\rangle}\right)^2}{|x-y|^{1+sp}}U(x,y)\,dy\,dx \\
&+
\int_{-1}^{1} W_{p,s}(1-|x|) |v(x)|^p (1-|x|)^{sp-1}\,dx .
\end{align*}
Since $p-1-ps < 0$, we have that
\[
W_{p,s}(1-|x|) \geq c_{p,s} (1-|x|)^{-(p-1)(ps-1)/p},\quad\textrm{for $x\in (-1,1)$;}
\]
note that this bound is not optimal, as $W_{p,s}(1-|x|)$ is infinite at zero.
Therefore
\[
W_{p,s}(1-|x|)(1-|x|)^{sp-1} \geq  c_{p,s} (1-|x|)^{\frac{sp-1}{p}}
\]
for $x\in (-1,1)$ and the proof is finished.
\end{proof}

Finally, we are in a~position to prove a version of \cite[Lemma 4.1 and Corollary 4.2]{MR2910984} for $1<p<2$.
\begin{lem}\label{lem10}
Let $0<s<1$, $q\geq 1$ and $1<p<2$ with $sp>1$. Then there exists a constant $c=c(p,q,s)>0$ such that for all $f\in C_c^{1}((-1,1))$ it holds
\begin{equation}\label{normsup1}
\|f\|_{\infty}^{p+q(sp-1)}\leq c\left(\int_{-1}^{1}\int_{-1}^{1}\frac{|f(x)-f(y)|^p}{|x-y|^{1+sp}}\,dy\,dx-\mathcal{D}_{1,s,p}\int_{-1}^{1}\frac{|f(x)|^p}{(1-|x|)^{sp}}\,dx\right)\|f\|_{q}^{q(sp-1)}. 
\end{equation}
In consequence, for all open proper $\Omega\subset\R$ and all $f\in C_c^{1}(\Omega)$,
\begin{equation}\label{normsup2}
\|f\|_{\infty}^{p+q(sp-1)}\leq c\left(\int_{\Omega}\int_{\Omega}\frac{|f(x)-f(y)|^p}{|x-y|^{1+sp}}\,dy\,dx-\mathcal{D}_{1,s,p}\int_{\Omega}\frac{|f(x)|^p}{\dist(x,\partial\Omega)^{sp}}\,dx\right)\|f\|_{q}^{q(sp-1)}. \end{equation}
\end{lem}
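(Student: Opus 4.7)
The plan is to adapt the proof of \cite[Lemma 4.1 and Corollary 4.2]{MR2910984}, given there for $p\ge 2$, to the range $1<p<2$, with Corollary~\ref{cor9} playing the role of \cite[Corollary 4.4]{MR2910984}. For \eqref{normsup1}, set $M:=\|f\|_\infty$, pick $x_0\in(-1,1)$ with $|f(x_0)|=M$, and denote $w(x)=(1-|x|)^{(sp-1)/p}$, $v=f/w$, $g=v^{\langle p/2\rangle}$. By Chebyshev's inequality, $\bigl|\{|f|\ge M/2\}\bigr|\le 2^q M^{-q}\|f\|_q^q=:\rho$. By Corollary~\ref{cor9}, the left-hand side of \eqref{normsup1} is bounded from below by
\[
C_p\iint_{(-1,1)^2}\frac{(g(x)-g(y))^2\,U(x,y)}{|x-y|^{1+sp}}\,dx\,dy+c_{p,s}\int_{-1}^{1}|v(x)|^p w(x)\,dx.
\]

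The heart of the argument is a one-dimensional fractional Morrey inequality for $g$, valid because $sp/2>1/2$, applied on the interval $I:=[x_0-\rho,x_0+\rho]$. In the generic regime $\rho\le(1-|x_0|)/2$ one has $I\subset(-1,1)$ with $w(x),w(y)$ comparable to $w(x_0)$ throughout $I$, hence $U(x,y)\gtrsim w(x_0)^p$ on $I\times I$. Since $\{|f|\ge M/2\}\cap I$ has measure at most $\rho<|I|$, one selects $y_0\in I$ with $|f(y_0)|\le M/2$ and the Morrey inequality gives
\[
(g(x_0)-g(y_0))^2\le C\rho^{sp-1}w(x_0)^{-p}\iint_{I\times I}\frac{(g(x)-g(y))^2\,U(x,y)}{|x-y|^{1+sp}}\,dx\,dy.
\]
Comparing with the lower bound $|g(x_0)-g(y_0)|^2\gtrsim M^p w(x_0)^{-p}$ (which follows from $|g(x_0)|=M^{p/2}w(x_0)^{-p/2}$ and $|g(y_0)|\le(M/2)^{p/2}w(y_0)^{-p/2}\lesssim(M/2)^{p/2}w(x_0)^{-p/2}$), the factors $w(x_0)^{\pm p}$ cancel, giving $M^p\lesssim \rho^{sp-1}\cdot(\text{LHS of }\eqref{normsup1})$, which is \eqref{normsup1} after inserting the value of $\rho$.

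The main obstacle is the boundary regime $\rho>(1-|x_0|)/2$, where $I$ would exit $(-1,1)$ and $w$ is no longer approximately constant near $x_0$. In that case $M$ is so large relative to $\|f\|_q$ that $f$ is forced to concentrate near $\pm 1$, whence the second summand $c_{p,s}\int|f|^p(1-|x|)^{-(p-1)(sp-1)/p}\,dx$ of Corollary~\ref{cor9} is large; combined with a H\"older interpolation between $\|f\|_\infty$ and $\|f\|_q$, exactly as in the $p\ge 2$ argument of \cite{MR2910984}, it yields the desired bound. Inequality \eqref{normsup2} then follows by decomposing the open proper $\Omega\subset\R$ into its connected components and pulling the component $(a,b)\subset\Omega$ containing a maximum point of $|f|$ back to $(-1,1)$ by the affine map $\tilde x\mapsto(a+b)/2+(b-a)\tilde x/2$: the Gagliardo integral, the Hardy integral and $\|f\|_q^q$ all rescale by compatible powers of $(b-a)/2$, and $\dist(x,\partial\Omega)\le\min(x-a,b-x)$ on $(a,b)$ ensures that applying \eqref{normsup1} to the rescaled function gives \eqref{normsup2} after the rescaling factors cancel.
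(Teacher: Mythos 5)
Your argument is structured differently from the paper's, which follows \cite[Proof of Lemma 4.1]{MR2910984} with a three-case split according to the position of the maximizer $x_0$ and whether $vw$ dips below $c_1 v(x_0)w(x_0)$ on $[0,\tfrac12]$. You instead introduce $\rho=2^qM^{-q}\|f\|_q^q$ via Chebyshev and split according to whether $\rho\le(1-|x_0|)/2$. The generic half of your case analysis is a sound sketch and plays the same role as the paper's Case~2: Chebyshev produces a $y_0$ at distance $\lesssim\rho$ from $x_0$ with $|f(y_0)|\le M/2$, and the pointwise Morrey bound of \cite[Lemma~4.5]{MR2910984} applied to $g=v^{\langle p/2\rangle}$ on the interval between $x_0$ and $y_0$ does the rest. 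One technical point you should address: the remainder in Corollary~\ref{cor9} is over $(-1,0)^2\cup(0,1)^2$, not over $(-1,1)^2$, so if $x_0<\rho$ the interval $I=[x_0-\rho,x_0+\rho]$ straddles the origin and you must instead work inside $[0,x_0+\rho]$ (or $[x_0-\rho,0]$), where the comparability of $w$ with $w(x_0)$ still holds.

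The genuine gap is in your boundary case. The condition $\rho>(1-|x_0|)/2$ does not force $f$ to concentrate near $\pm1$, and, more importantly, the weight term $c_{p,s}\int|f|^p(1-|x|)^{-(p-1)(sp-1)/p}\,dx$ alone cannot close the estimate there. Concretely, take a bump of height $M$ and width $\epsilon$ centred at $x_0=1-\delta$ with $\epsilon\asymp\delta\to0^+$: then $\rho\asymp 2^q\epsilon>\delta/2$ so you are in the boundary case, but the weight term contributes only $\approx M^p\epsilon\,\delta^{-(p-1)(sp-1)/p}$, and multiplying by $(\|f\|_q/M)^{q(sp-1)}\approx\epsilon^{sp-1}$ gives $\approx M^p\delta^{1+(sp-1)/p}\to0$, far short of the required $M^p$. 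What saves the estimate here is the modified Gagliardo remainder, which for this bump contributes $\approx M^p\epsilon^{1-sp}$ and produces exactly $M^p$ after multiplying by $\rho^{sp-1}$. So your boundary case, just like Case~3 of \cite{MR2910984}, requires a further sub-split (is $|f|$ comparable to $M$ on an interval of length $\asymp(1-x_0)$ adjacent to $x_0$, or does it dip?) and, in the latter sub-case, an explicit use of the modified Gagliardo term with the $U$ weight — which is precisely the part of the proof where the $1<p<2$ remainder $\widetilde E_w$ differs from the $p\ge2$ one, so it cannot be delegated to a citation without adaptation; this adaptation is exactly what the paper records when it writes out the Case~2 estimate with $U(x,y)\ge w(\tfrac12)^p$ and \cite[Lemma 4.5]{MR2910984} applied to $g$.

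Your deduction of \eqref{normsup2} from \eqref{normsup1} is essentially correct. For completeness, when the component of $\Omega$ carrying the maximum of $|f|$ is unbounded, say $(a,\infty)$, pick $b$ large enough that $b-x\ge x-a$ on $\supp f$, so that $\dist(x,\partial\Omega)=\min(x-a,b-x)$ on $\supp f$; then restrict and rescale as you describe.
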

\begin{proof}
The proof of $\eqref{normsup1}$ is a slight modification of \cite[Proof of Lemma 4.1]{MR2910984}. Let $w(x)=(1-|x|)^{(sp-1)/p}$ and $x=f/w$. By Corollary \ref{cor9} it suffices to prove the equivalent inequality
\begin{align*}
\|vw\|^{p+q(sp-1)}&\leq c\Bigg(\int_{-1}^{1}|v(x)|^p w(x)\,dx\\
&+\left(\int_{-1}^{0}\int_{-1}^{0}+\int_{0}^{1}\int_{0}^{1}\right)\frac{\left(v(x)^{\langle p/2\rangle}-v(y)^{\langle p/2\rangle}\right)^2}{|x-y|^{1+sp}}U(x,y)\,dy\,dx\Bigg)\|vw\|^{q(sp-1)}.
\end{align*}
Without loss of generality, we may assume that $v$ is nonnegative and that for some $x_0\in[0,1)$ it holds $v(x_0)w(x_0)=\|vw\|_{\infty}<\infty$. We distinguish three cases: \\
Case 1: $x_0\in[0,\tfrac{1}{2}]$ and $vw\geq c_1v(x_0)w(x_0)$ on $[0,\tfrac{1}{2}]$;\\
Case 2: $x_0\in[0,\tfrac{1}{2}]$ and there is a $z\in[0,\frac{1}{2}]$ such that $v(z)w(z)\leq c_1 v(x_0)w(x_0)$;\\
Case 3: $x_0\in(\tfrac{1}{2},1)$.

The only things that essentially change, compared to \cite[Proof of Lemma 4.1]{MR2910984}, are estimates of the integral $$\int_{0}^{1}\int_{0}^{1}\frac{|v(x)-v(y)|^p}{|x-y|^{1+sp}}w(x)^{p/2}w(y)^{p/2}\,dy\,dx$$ in the Case $2$ and $3$, which is replaced by $$\int_{0}^{1}\int_{0}^{1}\frac{\left(v(x)^{\langle p/2\rangle }-v(y)^{\langle p/2\rangle }\right)^2}{|x-y|^{1+sp}}U(x,y)\,dy\,dx$$ in our setting. 
For example, let us look at the case $2$. Let z be closest possible to $x_0$, so that $v(z)w(z)=c_1 v(x_0)w(x_0)$
and $vw\geq c_1 v(x_0)w(x_0)$ on the interval $I$ with endpoints $x_0$ and $z$.
We may write 
\begin{align*}
&\int_{0}^{1}\int_{0}^{1}\frac{\left(v(x)^{\langle p/2\rangle }-v(y)^{\langle p/2\rangle }\right)^2}{|x-y|^{1+sp}}U(x,y)\,dy\,dx\\
&=2\iint_{\{w(x)<w(y)\}}\frac{\left(v(x)^{\langle p/2\rangle }-v(y)^{\langle p/2\rangle }\right)^2}{|x-y|^{1+sp}}w(x)w(y)^{p-1}\,dy\,dx\\
&\geq 2\iint_{(I\times I)\cap \{w(x)<w(y)\}}\frac{\left(v(x)^{\langle p/2\rangle }-v(y)^{\langle p/2\rangle }\right)^2}{|x-y|^{1+sp}}w(x)w(y)^{p-1}\,dy\,dx\\
&\geq 2w(\tfrac{1}{2})^p\iint_{(I\times I)\cap \{w(x)<w(y)\}}\frac{\left(v(x)^{\langle p/2\rangle }-v(y)^{\langle p/2\rangle }\right)^2}{|x-y|^{1+sp}}\,dy\,dx\\
&=w(\tfrac{1}{2})^p\int_{I}\int_I\frac{\left(v(x)^{\langle p/2\rangle }-v(y)^{\langle p/2\rangle }\right)^2}{|x-y|^{1+sp}}\,dy\,dx.
\end{align*}
Using  $v(x_0)\geq c v(z)$ for some constant $c$ and \cite[Lemma 4.5]{MR2910984} with parameters $p'=2$ and $s'=sp/2$, the double integral above dominates a constant times 
 \begin{align*}
\left(v(x_0)^{\langle p/2 \rangle }-v(z)^{\langle p/2\rangle }\right)^{2}|x_0-z|^{1-sp}&\geq c'|v(x_0)^{\langle p/2\rangle}|^2|x_0-z|^{1-sp}\\
&\geq c''|v(x_0)w(x_0)|^p |x_0-z|^{1-sp}
\end{align*}
 which gives the desired bound from below. The case $3$ is treated similarly as above and in \cite{MR2910984} and we omit it. The rest of the proof remains unchanged, hence, \eqref{normsup1} follows. The inequality \eqref{normsup2} can then be derived from \eqref{normsup1} by a standard argument of translation, dilation and decomposition of any open set in $\R$ into a countable sum of intervals.
\end{proof}
\begin{proof}[Proof of Theorem \ref{twHSMgeneraldomains}]
By making use of Lemmas \ref{lem8} and \ref{lem10}, the proof is a copy of \cite[Proof of Theorem 1.1]{MR2910984}
\end{proof}

\section{Appendix}\label{Appendix}
\begin{proof}[Proof of Proposition \ref{proposition}]
We denote
$$
f(a,t)=\frac{|a-t|^p-(1-t)^{p-1}(|a|^p-t)}{t|a^{\langle p/2 \rangle}-1|^2}.
$$
Suppose first that $a>1>t$. Then, one may check that the partial derivative of $f$ with respect to $a$ is given by the formula
$$
\frac{\partial f }{\partial a}(a,t)=p\frac{(1-t)^{p-1}(a^{p-1}-ta^{ p/2 -1})-(a-t)^{p-1}(1-ta^{p/2 -1})}{t(a^{p/2}-1)^3}.
$$
To show that $\frac{\partial f}{\partial a}\leq 0$, equivalently we  will show that
$$
g(a,t):=(p-1)\log(1-t)+\log\left(a^{p-1}-ta^{p/2-1}\right)-(p-1)\log(a-t)-\log\left(1-ta^{p/2-1}\right)\leq 0.
$$
After elementary but tedious calculations we find that the partial derivative of $g$ with respect to $a$ is
\begin{align*}
\frac{t\left[\left(-pt+t-p+1\right)a^{p-2}+\left(\frac{p}{2}-1\right)ta^{p/2-2}+\frac{pt}{2}a^{3p/2-3}+\frac{p}{2}a^{p/2-1}+\left(\frac{p}{2}-1\right)a^{3p/2-2}\right]}{(a^{p-1}-ta^{p/2-1})(a-t)(1-ta^{p/2-1})}.    
\end{align*}
Notice that the denominator above is always nonnegative, as we have $ta^{p/2-1}<1$, $a^{p-1}-ta^{p/2-1}=a^{p/2-1}(a^{p/2}-t)>0$. Denote 
$$
h(a,t):=\left(-pt+t-p+1\right)a^{p-2}+\left(\frac{p}{2}-1\right)ta^{p/2-2}+\frac{pt}{2}a^{3p/2-3}+\frac{p}{2}a^{p/2-1}+\left(\frac{p}{2}-1\right)a^{3p/2-2}.
$$
It is a linear function with respect to $t$, hence, to show that $h(a,t)\leq 0$ we only need to check if $h(a,0)\leq 0$ and $h(a,1)\leq 0$ for all $a>1$. We have
\begin{align*}
F(a)&:=h(a,0)=a^{p-2}(1-p)+\frac{1}{2}pa^{p/2-1}+a^{3p/2-2}\left(\frac{p}{2}-1\right)\\
&=a^{p/2-1}\left[\frac{p}{2}-\left(1-\frac{p}{2}\right)a^{p-1}-(p-1)a^{p/2-1}\right].
\end{align*}
Standard calculation shows that the function in the square brackets above is nonpositive and vanishes at its maximum attained at $a=1$, thus, $F(a)\leq 0$, therefore we have $h(a,0)\leq 0$ for all $a>1$. Next, denote
\begin{align*}
G(a)&:=h(a,1)=2(1-p)a^{p-2}+\left(\frac{p}{2}-1\right)a^{p/2-2}+\frac{p}{2}a^{3p/2-3}+\frac{p}{2}a^{p/2-1}+\left(\frac{p}{2}-1\right)a^{3p/2-2}\\
&=a^{p/2-2}\left[2(1-p)a^{p/2}+\frac{p}{2}-1+\frac{p}{2}a^{p-1}+\frac{p}{2}a+\left(\frac{p}{2}-1\right)a^p\right]=:a^{p/2-2}H(a).
\end{align*}
We have
\begin{align*}
H'(a)&=p\left[(1-p)a^{p/2-1}+\frac{1}{2}(p-1)a^{p-2}+\frac{1}{2}+\left(\frac{p}{2}-1\right)a^{p-1}\right]
\end{align*}
and it is easy to see that $H'(a)\leq H'(1)=0$ for all $a\geq 1$. Hence, $H(a)$ is decreasing, therefore $H(a)\leq H(1)=0$. In consequence, $G(a)\leq 0$, thus $h(a,t)\leq 0$ for all $a\geq 1$ and $0\leq t\leq 1$. This means that $g(\cdot,t)$ is decreasing, hence, $g(a,t)\leq g(1,t)=0$. Overall, we get that $\frac{\partial f}{\partial a}(a,t)\leq 0$ for all $a\geq 1$. Therefore, $f(\cdot,t)$ is decreasing, thus, for $a>1$,
$$
f(a,t)\geq \lim_{a\rightarrow\infty}f(a,t)=\frac{1-(1-t)^{p-1}}{t}\geq p-1.
$$
The latter follows from concavity of the function $\tau\mapsto\tau^{p-1}$.

We now assume that $0<a<1$. Then, by the first part of the proof, 
\begin{align*}
p-1&\leq f\left(\frac{1}{a},t\right)=\frac{(1-at)^p-(1-t)^{p-1}(1-a^pt)}{t(1-a^{p/2})^2}    
\end{align*}
and we will show that the latter expression does not exceed $f(a,t)$. Equivalently, we need to show that for $0<a,t<1$ we have \begin{align*}
F(a,t):&=|a-t|^p-(1-t)^{p-1}(a^p-t)-((1-at)^p-(1-t)^{p-1}(1-a^pt))\\
&=|a-t|^p-(1-at)^p+(1+t)(1-t)^{p-1}(1-a^p)\geq 0.
\end{align*}
For $a>t$ we have
\begin{align*}
\frac{\partial F}{\partial a}&=p\left[(a-t)^{p-1}+t(1-at)^{p-1}-a^{p-1}(1+t)(1-t)^{p-1}\right]\\
&=pa^{p-1}\left[\left(1-\frac{t}{a}\right)^{p-1}+t\left(\frac{1}{a}-t\right)^{p-1}-(1+t)(1-t)^{p-1}\right]
\end{align*}
and, by a standard argument, the function in the square brackets attains its maximum equal to zero at $a=1$. For $0<a\leq t$ it holds
\begin{align*}
\frac{\partial F}{\partial a}&=p\left[-(t-a)^{p-1}+t(1-at)^{p-1}-a^{p-1}(1+t)(1-t)^{p-1}\right]\\
&=pa^{p-1}\left[-\left(\frac{t}{a}-1\right)^{p-1}+t\left(\frac{1}{a}-t\right)^{p-1}-(1+t)(1-t)^{p-1}\right]\\
&=:pa^{p-1}G(x,t),
\end{align*}
where we substitute $x=1/a\geq 1/t>1$. We have
\begin{align*}
\frac{\partial G}{\partial x}=(p-1)t\left[-(tx-1)^{p-2}+(x-t)^{p-2}\right]\leq 0,    
\end{align*}
because $x-t>tx-1$ and $p-2\leq 0$. Hence, 
$$G(x,t)\leq G\left(\frac{1}{t},t\right)=(1+t)(1-t)^{p-1}\left[\left(1+\frac{1}{t}\right)^{p-2}-1\right]\leq 0.$$
Combining all the results above we obtain that for $0<a,t<1,\,a\neq t$ it holds $\frac{\partial F}{\partial a}\leq 0$. Therefore, $F(a,t)\geq F(1,t)=0$ and that part of the proof is finished.

It suffices to show the initial inequality for $a\leq 0$ and $t>0$. Substituting $x=-a\geq 0$, we need to show that 
$$
f(x,t):=f(-a,t)=\frac{(x+t)^p-(1-t)^{p-1}(x^p-t)}{t(x^{p/2}+1)^{2}}\geq \frac{p(p-1)}{2},\,x\geq 0.
$$
Let us see that the constant $C_p$ from \eqref{Cp} satisfies $C_p=(p-1)/p$, for $1<p<\sqrt{2}$ and $C_p=p(p-1)/2$, when $\sqrt{2}\leq p <2$. Thus, in the last part of the proof we will distinguish these two cases of the range of $p$. We first assume that $p\in\left[\sqrt{2},2\right)$. By Taylor's formula we have
$$
(x+t)^p\geq x^p+ptx^{p-1}+\frac{p(p-1)}{2}t^2(x+t)^{p-2},
$$
therefore
\begin{align*}
(x+t)^p-(1-t)^{p-1}(x^p-t)&\geq x^p(1-(1-t)^{p-1})+ptx^{p-1}+t\left[(1-t)^{p-1}+\frac{p(p-1)}{2}t(x+t)^{p-2}\right]\\
&\geq x^p(1-(1-t)^{p-1})+ptx^{p-1}+t\left[1-t^{p-1}+\frac{p(p-1)}{2}t(x+1)^{p-2}\right]\\
&\geq (p-1)tx^p+ptx^{p-1}+\frac{p(p-1)}{2}t(x+1)^{p-2},
\end{align*}
as it is straightforward to see that the function in the square brackets above attains its minimum at $t=1$. Hence,
\begin{align*}
f(x,t)&\geq\frac{(p-1)x^p+px^{p-1}+\frac{p(p-1)}{2}(1+x)^{p-2}}{\left(x^{p/2}+1\right)^2}
\end{align*}
and we will show that the latter expression is bounded from below by $\frac{p(p-1)}{2}$. Equivalently, defining
$$
g(x):=\frac{1}{p(p-1)}\left((p-1)x^p+px^{p-1}+\frac{p(p-1)}{2}(1+x)^{p-2}-\frac{p(p-1)}{2}\left(x^{p/2}+1\right)^2\right),
$$
one has to check that $g(x)\geq 0$. The derivative of $g$ is given by
\begin{align}\label{g'}
\nonumber
 g'(x)&=x^{p/2-1}\left(x^{p/2-1}-\frac{p}{2}\right)+\left(1-\frac{p}{2}\right)\left(x^{p-1}-(1+x)^{p-3}\right)\\
 &\geq x^{p/2-1}\left(x^{p/2-1}-\frac{p}{2}\right)+\left(1-\frac{p}{2}\right)\left(x^{p-1}-1\right).
\end{align}
For $0<x<1$ we have $x^{p/2-1}>1$, hence
\begin{align*}
  g'(x) &\geq
x^{p/2-1} \left(x^{p/2-1}-\frac{p}{2}\right)+\left(1-\frac{p}{2}\right)\left(x^{p-1}-1\right)\\
&> 1-\frac{p}{2}+\left(1-\frac{p}{2}\right)\left(x^{p-1}-1\right)\\
&=\left(1-\frac{p}{2}\right)x^{p-1}\geq 0.
\end{align*}
For $x>1$ and $x^{p/2-1}\geq p/2$, that is $1<x\leq (\frac{2}{p})^{2/(2-p)}$ the two terms in \eqref{g'} are nonnegative, hence  to prove that $g'(x)\geq 0$, it suffices to assume that $x>(\frac{2}{p})^{2/(2-p)}$. Then we use $x^{p/2-1}< p/2$ and obtain
\begin{align*}
  g'(x)&\geq x^{p-2}-\frac{p^2}{4}+\left(1-\frac{p}{2}\right)\left(x^{p-1}-1\right)\\
  &=\left(1-\frac{p}{2}\right)x^{p-1}+x^{p-2}-\frac{1}{4}\left((p-1)^2+3\right)\\
  &\geq \left(1-\frac{p}{2}\right)x^{p-1}+x^{p-2}-\frac{p+2}{4}:=h(x),
\end{align*}
since $(p-1)^2<p-1$. The global minimum of the function $h$ is attained at the point $x_0=\frac{2}{p-1}$, therefore
\begin{align*}
 h(x)&\geq h(x_0)=2^{p-2}(p-1)^{1-p}-\frac{p+2}{4}.   
\end{align*}
To show that the expression above is nonnegative for $p\in[\sqrt{2},2]$, equivalently, one has to check that
\[
k(p):=p\log 2-(p-1)\log(p-1)-\log(p+2)\geq 0\,,\quad p\in[\sqrt{2},2).
\]

We have $k'(p)=\log 2-1-\log(p-1)-\frac{1}{p+2}$ and $k''(p)=-\frac{p^2+3p+5}{(p-1)(p+2)^2}\leq 0$, therefore $k$ is concave on $[1,2]$.
It holds $k(\sqrt{2})\approx 0.117>0$ and $k(2)=0$, so by concavity it has to be $k(p)\geq 0$ for all $p\in[\sqrt{2},2]$. In consequence, the function $g'$ is always nonnegative, therefore $g(x)\geq g(0)=0$ and that finishes this part of the proof.

 To end the proof we note that, for $p\in(1,2)$, by making use of the fact that $(x+t)^p\geq x^p+t^p$, similarly as before we may show that for $a\leq 0$ it holds
 \begin{align*}
 f(x,t)&\geq\frac{x^p+t^p-(1-t)^{p-1}(x^p-t)}{t(x^{p/2}+1)^2}\\
 &=\frac{x^p\frac{1-(1-t)^{p-1}}{t}+t^{p-1}+(1-t)^{p-1}}{(x^{p/2}+1)^2}\\
 &\geq\frac{(p-1)x^p+t+(1-t)}{(x^{p/2}+1)^2}=\frac{(p-1)x^p+1}{(x^{p/2}+1)^2}\\
 &\geq\min_{x\geq 0}\frac{(p-1)x^p+1}{\left(x^{p/2}+1\right)^2}=\frac{p-1}{p}.
 \end{align*}
 That ends the proof.
\end{proof}


\def\cprime{$'$} \def\cprime{$'$}
  \def\polhk#1{\setbox0=\hbox{#1}{\ooalign{\hidewidth
  \lower1.5ex\hbox{`}\hidewidth\crcr\unhbox0}}}
  \def\polhk#1{\setbox0=\hbox{#1}{\ooalign{\hidewidth
  \lower1.5ex\hbox{`}\hidewidth\crcr\unhbox0}}}
  \def\polhk#1{\setbox0=\hbox{#1}{\ooalign{\hidewidth
  \lower1.5ex\hbox{`}\hidewidth\crcr\unhbox0}}} \def\cprime{$'$}
  \def\cprime{$'$} \def\cprime{$'$} \def\cprime{$'$} \def\cprime{$'$}
  \def\cprime{$'$}

\end{document}